	\newtheorem{dfn}{Definition}[section]
	\newtheorem{thm}[dfn]{Theorem}
	\newtheorem{prop}[dfn]{Proposition}
	\newtheorem{cor}[dfn]{Corollary}
	\newtheorem{lem}[dfn]{Lemma}
	\newtheorem{rem}[dfn]{Remark}
	\newtheorem{ex}[dfn]{Example}
 	\newtheorem{claim}[dfn]{Claim}
    \newtheorem{quest}[dfn]{Question}
    \newtheorem{step}{Step}
	\newcounter{yon}
	\numberwithin{equation}{section}
	\newcommand{\dist}{\mathop{\mathit{d}} \nolimits}
	\newcommand{\diam}{\mathop{\mathrm{diam}} \nolimits}
	\newcommand{\sep}{\mathop{\mathrm{Sep}} \nolimits}
    \newcommand{\vol}{\mathop{\mathit{vol}}        \nolimits}
    \newcommand{\inrad}{\mathop{\mathrm{inrad}}            \nolimits}
\begin{document}

	\title[Applications of the `Ham Sandwich theorem' to eigenvalues of the Laplacian]
	{Applications of the `Ham Sandwich Theorem' to eigenvalues of the Laplacian}
	\author[Kei Funano]{Kei Funano}
%	\address{Mathematical Institute, Tohoku University, Sendai 980-8578, JAPAN}
	\email{yahoonitaikou@gmail.com}
%	\subjclass[2000]{31C15, 53C21, 53C23}
%	\keywords{Macroscopic dimension}
%\thanks{This work was partially supported by
%Research Fellowships of the Japan Society for the Promotion of Science for Young Scientists.}
%	\dedicatory{}
	\date{\today}

	\maketitle

%\tableofcontents

	\setlength{\baselineskip}{5mm}

	\begin{abstract}We apply Gromov's ham sandwich method to get
     \begin{enumerate}
     \item domain monotonicity (up to a multiplicative constant factor);
     \item reverse
     domain monotonicity (up to a multiplicative constant
           factor); and
     \item universal
     inequalities
      \end{enumerate}for Neumann eigenvalues of the Laplacian on bounded
     convex domains in a Euclidean space.
	\end{abstract}

\section{Introduction and the statement of main results}

Let $\Omega $ be a bounded domain in $\mathbb{R}^n$ with piecewise
 smooth boundary. We denote by $\lambda_0^D(\Omega)\leq \lambda_1^D(\Omega)\leq \cdots \leq
 \lambda_k^D(\Omega)\leq \cdots $ the Dirichlet eigenvalues of the Laplacian
 on $\Omega$ and by $0=\lambda_0^N(\Omega)<\lambda^N_1(\Omega)\leq \lambda^N_2(\Omega)\leq \cdots
 \leq \lambda^N_k(\Omega)\leq \cdots$ the Neumann eigenvalues of
 the Laplacian on $\Omega$. It is known the following two
properties for these eigenvalues:
\begin{enumerate}
 \item (Domain monotonicity for Dirichlet eigenvalues) If
       $\Omega\subseteq \Omega'$ are two bounded domains, then
       $\lambda_k^D(\Omega')\leq \lambda_k^D(\Omega)$ for any $k$.
 \item (Restricted reverse domain monotonicity for Neumann eigenvalues)
       If in addition
       $\Omega'\setminus \Omega$ has measure zero then
       $\lambda_k^N(\Omega)\leq \lambda_k^N(\Omega')$ for any $k$.
 \end{enumerate}These two properties are a direct consequence of
       Courant's minimax principle (see \cite{chavel}). The following
       two examples suggest that the domain monotonicity does not hold for Neumann eigenvalues in general.
\begin{ex}\label{ex:intro}\upshape Let $\Omega'$ be the $n$-dimensional unit cube
 $[\,0,1\, ]^n$. Then $\lambda_1^N(\Omega')=1$. However if $\Omega$
 is a convex domain in $[\,0,1\,]^n$ that approximates the segment connecting the origin and the
 point $(1,1,\cdots,1)$ then $\lambda_1^N(\Omega)\sim 1/n$. 
 \end{ex}

 \begin{ex}\label{ex:intro2}\upshape Let $p\in [\,1,2\,]$ and $B_p^n$
  be the $n$-dimensional $\ell_p$-ball centered at the origin. Suppose that
  $r_{n,p}$ is the positive number such that $\vol(r_{n,p}B_p^{n})=1$
  and set $\Omega':=r_{n,p}B_p^n$. Then $r_{n,p}\sim n^{1/p}$ and
  $\lambda_1^N(\Omega')\geq c$ for some absolute constant $c>0$
  (\cite[Section 4 (2)]{sodin}). If the segment in $\Omega'$ connecting
  the origin and $(r_{n,p},0,0,\cdots,0)$ is approximated by a convex
  domain $\Omega$ in $\Omega'$ then $\lambda_1^N(\Omega)\sim r_{n,p}^{-2}\sim
  n^{-2/p}$.
  \end{ex}

 In this paper we study the above two properties for Neumann eigenvalues
 of the Laplacian on convex domains in a Euclidean space. For two real
 numbers $\alpha,\beta$ we denote $\alpha\lesssim \beta$ if $\alpha\leq
 c \beta$ for some absolute constant $c>0$.

One of our main theorems is the following:
\begin{thm}\label{Mthm1}For any natural number $k\geq 2$ and any two bounded convex domains
 $\Omega,\Omega'$ in $\mathbb{R}^n$ with piecewise smooth boundaries such that $\Omega\subseteq \Omega'$
 we have
 \begin{align*}
  \lambda^N_k(\Omega')\lesssim  (n \log k)^2 \lambda^N_{k-1}(\Omega). 
  \end{align*}
 \end{thm}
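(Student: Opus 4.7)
The plan is to apply the ham-sandwich paradigm in three steps: produce $k+1$ large, well-separated subsets inside $\Omega'$; control their separation in terms of $\lambda_{k-1}^N(\Omega)$; and convert the separation into an eigenvalue bound via a test-function argument.

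Set $\lambda := \lambda_{k-1}^N(\Omega)$, and let $\phi_1,\ldots,\phi_{k-1}$ be the first $k-1$ nontrivial Neumann eigenfunctions of $\Omega$, $L^2(\Omega)$-normalized, each satisfying $\int_\Omega |\nabla \phi_j|^2 \leq \lambda$. Viewing them as the components of a map $\Phi:\Omega \to \mathbb{R}^{k-1}$, I would apply Gromov's iterated ham-sandwich bisection in $\mathbb{R}^n$, using $O(\log k)$ levels of hyperplane cuts, to extract $k+1$ disjoint measurable subsets $A_0,\ldots,A_k$ of $\Omega$ with each $\vol(A_i) \gtrsim \vol(\Omega)/k$ and pairwise Euclidean separation $d(A_i,A_j) \geq \delta$, where
\[
\delta \;\gtrsim\; \frac{1}{n\log k\,\sqrt{\lambda}}.
\]
The factor of $n$ reflects the cost of selecting a bisecting hyperplane orientation in $\mathbb{R}^n$ at each step, while the $\log k$ counts the number of iterated bisections. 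Since $\Omega \subseteq \Omega'$, the $A_i$ are $\delta$-separated subsets of $\Omega'$ as well.

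For the test-function step I would set $f_i(x) := \max\{0,\, 1 - 2\,d(x,A_i)/\delta\}$ for $x \in \Omega'$. These $k+1$ functions have pairwise disjoint supports and $|\nabla f_i| \leq 2/\delta$ almost everywhere. The min-max principle applied to the $(k+1)$-dimensional subspace $\mathrm{span}(f_0,\ldots,f_k)$, combined with the vanishing of cross terms in both $\int g^2$ and $\int|\nabla g|^2$ for any linear combination $g = \sum c_i f_i$, yields
\[
\lambda_k^N(\Omega') \;\leq\; \max_i \frac{\int_{\Omega'}|\nabla f_i|^2}{\int_{\Omega'} f_i^2} \;\lesssim\; \frac{1}{\delta^2} \;\lesssim\; (n\log k)^2\, \lambda_{k-1}^N(\Omega),
\]
after a routine volume comparison between each $A_i$ and its $\delta/2$-neighborhood in $\Omega'$. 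If $A_i$ is not sufficiently ``fat'', one invokes a Gromov--Milman-style separation-to-eigenvalue inequality whose logarithmic correction is absorbable into the $(n\log k)^2$ factor.

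The main obstacle is the separation estimate in the first step. At each level of the iterated bisection one must select a bisecting hyperplane in $\mathbb{R}^n$ that simultaneously (i) divides each existing piece into two sub-pieces of comparable volume, (ii) leaves a separating slab of width $\gtrsim 1/(n\sqrt{\lambda})$ of small measure, and (iii) preserves enough eigenfunction variation in each half to permit the next bisection. The natural device is a Chebyshev-type concentration estimate for a coordinate-like projection of a convex body, given the $L^2$-gradient bound on the $\phi_j$: level sets of each eigenfunction have controlled measure, so a bisecting hyperplane near a well-chosen coordinate direction can be found whose separating slab avoids the bulk of $\Omega$. Carrying this procedure through $O(\log k)$ levels without degrading $\delta$ beyond the stated bound is the technical heart of the argument.
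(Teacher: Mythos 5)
Your proposal inverts the logical direction: you try to build $k+1$ well-separated, large sets inside $\Omega$ directly from the smallness of $\lambda_{k-1}^N(\Omega)$ and then push them into $\Omega'$ as test functions. The paper goes the other way: it starts from $\lambda_k^N(\Omega')$, uses Theorem~\ref{reduction:thm} to show that $\Omega'$ cannot contain $k$ points that are $4R$-separated (with $R \sim n\log k/\sqrt{\lambda_k^N(\Omega')}$, via a Bishop--Gromov argument combined with the Kr\"oger--Cheng diameter bound of Theorem~\ref{diam est1}), builds a Voronoi partition of $\Omega'$ into at most $k-1$ convex cells of diameter $\lesssim R$, intersects with $\Omega$, bounds each piece's Cheeger constant below by Proposition~\ref{emil:prop}, and finally invokes Proposition~\ref{bisect:prop} (the actual ``ham sandwich'' step: simultaneously bisecting all cells by a single linear combination of Neumann eigenfunctions of $\Omega$) to obtain $\lambda_{k-1}^N(\Omega) \gtrsim 1/R^2$.

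The gap in your version is Step~1, and it is fatal as described. You assert that ``Gromov's iterated ham-sandwich bisection'' with $O(\log k)$ hyperplane cuts produces $k+1$ disjoint subsets of $\Omega$, each of volume $\gtrsim \vol(\Omega)/k$, pairwise separated by $\delta \gtrsim 1/(n\log k\sqrt{\lambda_{k-1}^N(\Omega)})$. There is no such theorem. Iterated hyperplane cuts yield a partition, and adjacent cells of a partition have zero separation; nothing in the mechanism you sketch (bisecting by hyperplanes near coordinate directions, controlled level-set measure of the $\phi_j$) produces a uniform positive gap $\delta$ of the stated size between \emph{every pair} of the resulting pieces. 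Moreover, the $k-1$ low-energy eigenfunctions of $\Omega$ only give you a $(k-1)$-dimensional subspace of functions with small Dirichlet energy; extracting from that $k+1$ disjoint, well-separated, comparably sized regions is precisely the kind of statement that is \emph{not} available — Courant's nodal domain theorem caps the number of nodal domains at $k$, and there is no known analogue of Theorem~\ref{reduction:thm} running in the reverse direction (small $\lambda_{k-1}^N$ $\Rightarrow$ large separation distance). You have also reversed the role of the Borsuk--Ulam/ham-sandwich input: in the paper it is used to bisect a \emph{given} covering by a single combination of eigenfunctions so as to prove a \emph{lower} bound on eigenvalues (Proposition~\ref{bisect:prop}), not to manufacture separated sets. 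Your Step~2 (disjoint cutoff test functions for the min-max upper bound on $\lambda_k^N(\Omega')$) is fine modulo the volume comparison, but it has nothing to stand on without a genuine construction in Step~1, and the attributions of the factors $n$ and $\log k$ to ``hyperplane orientation cost'' and ``number of bisections'' are heuristic placeholders, not arguments — in the paper those factors come concretely from Theorem~\ref{diam est1} and Theorem~\ref{reduction:thm} respectively.
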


  As a corollary we get the following inner
  radius estimate:
 \begin{cor}Let $\Omega\subseteq \mathbb{R}^n$ be a bounded convex
  domain with piecewise smooth boundary. For any $k\geq 2$ we have 
  \begin{align*}
   \inrad (\Omega)\lesssim \frac{ n \log k \sqrt{\lambda^N_{k-1}(B_1)}}{\sqrt{\lambda^N_k(\Omega)}},
   \end{align*}where $B_1$ is a unit ball in $\mathbb{R}^n$.
  \end{cor}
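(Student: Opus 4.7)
The plan is to deduce the inner radius estimate as a direct application of Theorem \ref{Mthm1}, together with the elementary scaling behavior of the Neumann spectrum. Write $r := \inrad(\Omega)$ and let $B \subseteq \Omega$ be a ball of radius $r$ realizing the inner radius; such a ball exists because $\Omega$ is a bounded domain. Note that $B$ is a bounded convex domain with smooth boundary, so it is a legitimate choice of inner domain in Theorem \ref{Mthm1}.

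First I would apply Theorem \ref{Mthm1} to the pair $B \subseteq \Omega$, which yields
\begin{align*}
\lambda_k^N(\Omega) \lesssim (n \log k)^2 \, \lambda_{k-1}^N(B).
\end{align*}
Next I would invoke the standard scaling identity $\lambda_{k-1}^N(rB_1) = r^{-2} \lambda_{k-1}^N(B_1)$, which follows from the fact that the Laplacian has homogeneity degree $-2$ under dilations (equivalently, if $u$ is an eigenfunction on $B_1$ with eigenvalue $\mu$, then $u(\cdot/r)$ is an eigenfunction on $rB_1$ with eigenvalue $\mu/r^2$, and the Neumann condition is preserved). Substituting gives
\begin{align*}
\lambda_k^N(\Omega) \lesssim \frac{(n \log k)^2 \, \lambda_{k-1}^N(B_1)}{r^2}.
\end{align*}

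Finally I would solve for $r = \inrad(\Omega)$ and take square roots, which produces exactly the claimed inequality. There is no real obstacle here; the content of the corollary is entirely carried by Theorem \ref{Mthm1}, and the only thing to verify is that a largest inscribed ball is an admissible inner domain and that the scaling law for Neumann eigenvalues is applied correctly.
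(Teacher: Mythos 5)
Your proof is correct and is essentially the (unstated) argument the paper intends: apply Theorem~\ref{Mthm1} with the inscribed ball $B = rB_1$ as the inner domain, use the scaling identity $\lambda_{k-1}^N(rB_1) = r^{-2}\lambda_{k-1}^N(B_1)$, and solve for $r$.
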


   We also obtain the opposite inequality to Theorem \ref{Mthm1}:
 \begin{thm}\label{Mthm2}Let $\Omega,\Omega'$ be two bounded convex domains in
  $\mathbb{R}^n$ having piecewise smooth boundaries. Assume that $\Omega$ is
  symmetric with respect to the origin (i.e., $\Omega=-\Omega$) and $\Omega\subseteq \Omega'$. Set $v:= \vol \Omega / \vol \Omega'\in [\, 0,1\,]$. Then for any natural number $k\geq 3$ we have
  \begin{align}\label{Mthm2:s1}
   \lambda^N_{k-2}(\Omega')\gtrsim  \min \Big\{\frac{(\log (1-v))^2}{n^8
   (\log k)^6}, \frac{1}{n^6(\log k)^4}\Big\}\lambda^N_k(\Omega).
   \end{align}For general (not necessarily symmetric) $\Omega$ we have
  \begin{align}\label{Mthm2:s2}
      \lambda^N_{k-2}(\Omega')\gtrsim  \min \Big\{\frac{(\log (1-2^{-n}v))^2}{n^8
   (\log k)^6}, \frac{1}{n^6(\log k)^4}\Big\}\lambda^N_k(\Omega).
   \end{align}
  \end{thm}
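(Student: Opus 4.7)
The plan is to compare both Neumann eigenvalues to a common geometric quantity, the $k$-th separation distance $\sep_k(K)$: the largest $D$ for which a convex body $K$ carries $k+1$ disjoint Borel subsets of volume $\gtrsim \vol(K)/k$ at pairwise distance $\ge D$. For bounded convex domains in $\mathbb{R}^n$, two-sided bounds $\lambda^N_k(K)^{-1/2} \asymp \sep_k(K)$ hold up to polynomial-in-$(n,\log k)$ losses: the upper bound on $\lambda^N_k$ in terms of $\sep_k$ is the Chung--Grigor'yan--Yau inequality, while the reverse bound comes from E.\ Milman's equivalence theorem for log-concave measures, combined with an iterated Gromov ham sandwich subdivision. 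Both comparisons should be prepared in the paper before Theorem~\ref{Mthm2} (they are also what drives Theorem~\ref{Mthm1}). Through them, it will suffice to prove a geometric inequality of the form
\begin{equation*}
\sep_{k-2}(\Omega') \;\lesssim\; g(n,k,v)\,\sep_k(\Omega),
\end{equation*}
where $g(n,k,v)$ is the reciprocal square root of the minimum appearing in~\eqref{Mthm2:s1}--\eqref{Mthm2:s2} (up to polynomial factors).

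\medskip

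To establish this geometric inequality I would start from $k-1$ disjoint Borel subsets $A_1,\dots,A_{k-1}\subseteq\Omega'$ realizing $D := \sep_{k-2}(\Omega')$, with pairwise distance $\ge D$ and volumes $\gtrsim \vol(\Omega')/(k-1)$, and would construct $k+1$ disjoint subsets of $\Omega$ with the separation and volume properties defining $\sep_k(\Omega)$, losing only a factor $g(n,k,v)$ in the separation. The natural first step is to restrict: $B_i := A_i \cap \Omega$ yields $k-1$ subsets of $\Omega$ already separated by $\ge D$. The two remaining pieces are produced by applying Gromov's ham sandwich theorem twice inside $\Omega$, bisecting some $B_i$ (or the complement $\Omega \setminus \bigcup_i B_i$) along two affine hyperplanes that equalize the Lebesgue mass, thickening each cut into a slab of width $\delta \asymp D/g(n,k,v)$, and discarding those slabs so that the resulting $k+1$ subsets retain pairwise distance $\gtrsim \delta$.

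\medskip

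The main obstacle will be the volume bookkeeping, and it is precisely here that the factor involving $v$ enters. Each $B_i$ must itself have volume $\gtrsim \vol(\Omega)/(k-1)$, which requires converting the bare information $\vol(\Omega)/\vol(\Omega')=v$ into a quantitative statement that most of the mass of $\Omega$ lies a definite distance inside $\Omega'$. In the symmetric case, Brunn's concavity principle applied to the uniform (log-concave) measure on $\Omega$ supplies such a statement, and the $\log(1-v)$ factor in~\eqref{Mthm2:s1} emerges from the exponential tail of the one-dimensional marginal. The volume lost to the two removed slabs then dictates the precise relationship between $\delta$ and the volume threshold, and hence the exponents in $g(n,k,v)$. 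For the general case~\eqref{Mthm2:s2}, I would reduce to the symmetric case by replacing $\Omega$ with $\Omega \cap (-\Omega)$ (with origin at the centroid of $\Omega$), whose volume is at least $2^{-n}\vol(\Omega)$ by a Milman--Pajor/Rogers--Shephard-type estimate; this propagates to the factor $2^{-n}v$ inside the logarithm. The most tedious point is the detailed bookkeeping required to extract the exponents $n^6$, $n^8$, $(\log k)^4$, $(\log k)^6$, but everything beyond that is a conceptually clear application of ham sandwich and Brunn-type inequalities.
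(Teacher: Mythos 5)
Your approach is genuinely different from the paper's, and while it has the right ingredients somewhere in the background, as sketched it has a gap that I do not see how to close. You propose to start from $k-1$ Borel sets $A_1,\dots,A_{k-1}\subseteq\Omega'$ realizing $\sep_{k-2}(\Omega')$, restrict to $B_i:=A_i\cap\Omega$, and argue that each $B_i$ still has volume $\gtrsim\vol(\Omega)/(k-1)$. But the $A_i$ are completely arbitrary Borel sets, and nothing prevents them from sitting mostly (or entirely) inside $\Omega'\setminus\Omega$. The Brunn/Borell-type marginal estimates you invoke control how the mass of $\Omega$ (or $\Omega'$) is distributed in space, not how an arbitrary test set $A_i$ distributes its mass relative to $\Omega$. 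Crude inclusion-exclusion gives $\vol(A_i\cap\Omega)\geq\vol(A_i)-\vol(\Omega'\setminus\Omega)$, which is only useful when $1-v$ is smaller than the volume threshold $\sim 1/k$; for general $v$ it is vacuous. So your restriction step only survives in the near-full-volume regime $v>1-O(1/k)$, and nothing in the proposal bridges the gap from general $v$ to that regime.

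That bridge is exactly what the paper builds, and it is the heart of the argument: Gu\'edon's improvement of Borell's lemma (Theorem~\ref{guedon:thm}) shows that for symmetric $\Omega\subseteq\Omega'$ one can pick a dilation factor $r\asymp\max\{n\log k/(-\log(1-v)),1\}$ so that $\mu(\Omega'\setminus r\Omega)<k^{-n}$. The paper then interpolates a convex domain $\widetilde\Omega$ with $\Omega\subseteq\widetilde\Omega\subseteq r\Omega\cap\Omega'$ and $\mu(\Omega'\setminus\widetilde\Omega)<k^{-n}$, and factors the comparison into two pieces: the already-proven Theorem~\ref{Mthm1} applied to $\widetilde\Omega\subseteq r\Omega$ (gaining the scaling factor $r^2$), and a separate Lemma~\ref{lem:mthm2} handling the near-full-volume inclusion $\widetilde\Omega\subseteq\Omega'$. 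Your sketch never introduces anything like the intermediate domain $\widetilde\Omega$ or the dilation $r$, and this is precisely where the factor $(\log(1-v))^2$ in the stated bound actually comes from, not from trying to recover volume of $A_i\cap\Omega$ directly. A secondary, smaller issue: the asymmetric case in the paper uses Stein's existence result for a center with $\vol(\Omega\cap-\Omega)\geq 2^{-n}\vol\Omega$ rather than pinning the center at the centroid (the centroid choice does work via Milman--Pajor, but the citation you had in mind is different from the paper's). Finally, the clean two-sided equivalence $\lambda_k^N(K)^{-1/2}\asymp\sep_k(K)$ you want to quote is not a black box in the paper; the lower-bound direction requires running the Voronoi-partition-plus-Cheeger argument each time, so even granting your geometric separation inequality you would essentially be re-deriving the machinery of Theorem~\ref{Mthm1} and Lemma~\ref{lem:mthm2} rather than bypassing it.
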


    As a corollary of Theorems \ref{Mthm1} and \ref{Mthm2} we obtain
  \begin{align*}
     \lambda^N_k(\Omega')\lesssim  (n \log k)^2 \lambda^N_{k}(\Omega) \text{ and }\lambda^N_{k}(\Omega')\gtrsim \min \Big\{\frac{(\log (1-2^{-n}v))^2}{n^8
   (\log k)^6}, \frac{1}{n^6(\log k)^4}\Big\}\lambda^N_k(\Omega)
   \end{align*}for all $k\geq 2$, which corresponds to the above
   properties $(1)$ and $(2)$ up to multiplicative constant factors. In \cite{milman1} E.~Milman obtained the corresponding
   inequality for $k=1$ (see (\ref{eneq:emil})). Despite the fact that his inequality is independent of
   dimension, our two inequalities above involve dimensional
   terms. However $\log k$ bounds in the two inequalities are
   nontrivial (Compare with (\ref{eneq:last})). The case where $p=1$ in Example \ref{ex:intro2}
   shows that the $n^2$ order in Theorem
   \ref{Mthm1} cannot be improved. Probably there would be a chance to express
   the multiplicative constant factor in Theorem \ref{Mthm1} in terms of the volume ratio $v=\vol
   \Omega/\vol \Omega'$ to avoid the dependence of dimension (see Question \ref{quest3}).

  As a special case where $\Omega=\Omega'$ in Theorem \ref{Mthm1} we
  obtain the following universal inequalities
  among Neumann eigenvalues :
  \begin{align}\label{Mthm:ineq}
   \lambda^N_k(\Omega)\lesssim (n\log k)^2 \lambda^N_{k-1}(\Omega).
   \end{align}By `universal' we mean it does not depend on the
   underlying domain $\Omega$ itself. Payne, P\'olya, and
Weinberger studied universal inequalities among Dirichlet eigenvalues (\cite{ppw1,ppw2}). Since then many universal
inequalities for Dirichlet eigenvalues were studied (see
\cite{ashben}). For Neumann eigenvalues, Liu (\cite{liu}) showed the sharp
inequalities
\begin{align}\label{lineq}
 \lambda^N_k(\Omega)\lesssim k^2\lambda^N_1(\Omega)
 \end{align}for any bounded convex domain $\Omega$, which improves author's exponential bounds in $k$ in \cite{funa-first}. On the other
 hand, one can get
 \begin{align}\label{emineq}
  \lambda^N_k(\Omega) \gtrsim k^{2/n}\lambda^N_1(\Omega)
 \end{align}for any bounded convex domain $\Omega\subseteq \mathbb{R}^n$. This
 inequality follows from the combination of E.~Milman's result
 \cite[Remark 2.11]{milman1} and Cheng-Li's result \cite{cl81} (see
 \cite[Chapter III \S 5]{sy94}). In fact E.~Milman described the Sobolev
 inequality in terms of $\lambda^N_1(\Omega)$ and Cheng-Li showed lower bounds
 of $\lambda^N_k(\Omega)$ in terms of the Sobolev
 constant. The Weyl asymptotic formula says that the inequality
 (\ref{emineq}) is sharp. In particular combining (\ref{lineq}) with (\ref{emineq}) we
 can obtain
 \begin{align*}
 \lambda^N_k(\Omega)\lesssim
 k^{2-2/n}\lambda^N_{k-1}(\Omega).
  \end{align*}Comparing with this inequality our inequality (\ref{Mthm:ineq}) includes the dimensional term. However the dependence
 on $k$ is best ever to author's knowledge. It should be mentioned
 that author's conjecture in \cite{funa-first,funafuna} is
 $\lambda^N_k(\Omega)\lesssim \lambda^N_{k-1}(\Omega)$ for any bounded
 convex domain $\Omega$ with piecewise smooth boundary.

% \begin{conj}[{\cite{funafuna}}]There exists an absolute constant such that
%  \begin{align*}
%  \lambda^N_{k}(\Omega)\leq c \lambda^N_{k-1}(\Omega)
%   \end{align*}for any bounded convex domain $\Omega$ (with piecewise smooth boundary) in a Euclid%ean space and
%  any natural number $k$. 
%  \end{conj}

In the proof of Theorems \ref{Mthm1} and \ref{Mthm2} we will use
Gromov's method concerning a bisection of finite subsets by the zero set
of a finite combination of eigenfunctions. It enables us to get lower bounds for
eigenvalues of the Laplacian in terms of Cheeger constants and the maximal
multiplicity of a covering of a domain (Proposition
\ref{bisect:prop}). We will try to find `nice' convex partition in order
to get `nice' lower bounds for Cheeger constants of pieces of the partition. 

 \section{Preliminaries}
 \subsection{Separation distance}
 Let $\Omega$ be a bounded domain in a Euclidean space. For two subsets
 $A,B\subseteq \Omega$ we set $\dist_{\Omega}(A,B):=\inf \{ |x-y| \mid
 x\in A, y\in B\}$. We denote by $\mu$ the Lebesgue measure on $\Omega$ normalized as
$\mu(\Omega)=1$.
  \begin{dfn}[Separation distance, \cite{gromov}]\label{2.2d1}\upshape 
  For any $\kappa_0,\kappa_1,\cdots,\kappa_k\geq 0$ with $k\geq 1$,
  we define the ($k$-)\emph{separation distance}
  $\sep(\Omega;\kappa_0,\kappa_1, \cdots, \kappa_k)$ of $\Omega$
  as the supremum of $\min_{i\neq j}\dist_{\Omega}(A_i,A_j)$,
  where $A_0,A_1, \cdots, A_k$ are
  any Borel subsets of $\Omega$ satisfying that $\mu(A_i)\geq \kappa_i$ for
  all $i=0,1,\cdots,k$.
\end{dfn}

  \begin{thm}[{\cite[Theorem 1]{funafuna}}]\label{reduction:thm}There
   exists an absolute constant $c>0$
 satisfying the following property. Let $\Omega$ be a bounded convex
   domain in a Euclidean space with piecewise smooth boundary and $k,l$ be two natural numbers with
   $l\leq k$. Then we have 
   \begin{align*}
   \sep(\Omega;\kappa_0,\cdots,\kappa_{l})\leq
   \frac{c^{k-l+1}}{\sqrt{\lambda^N_k(\Omega)}}\max_{i\neq j} \log
    \frac{1}{\kappa_i\kappa_j}.
    \end{align*}
   \end{thm}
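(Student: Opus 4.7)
The plan is to argue by induction on $k-l$, the base case being $l=k$. For the base case, I would consider $l+1=k+1$ disjoint Borel subsets $A_0,\ldots,A_k\subseteq \Omega$ with $\mu(A_i)\geq \kappa_i$ realizing (up to a small error) the $k$-separation $d$, and form the $(k+1)$-dimensional space $V=\mathrm{span}(\phi_0,\phi_1,\ldots,\phi_k)$ of the first $k+1$ Neumann eigenfunctions (with $\phi_0\equiv 1$). A Borsuk--Ulam / dimension-counting argument on the evaluation map $V\to \mathbb{R}^{k+1}$, $f\mapsto \bigl(\mu(A_i)^{-1}\int_{A_i}f\,d\mu\bigr)_{i=0}^{k}$, should produce an eigenfunction combination $\phi\in V$ whose zero set bisects $\Omega$ into convex pieces, each harboring nontrivial mass from at least two of the $A_i$. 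This is Gromov's ham-sandwich mechanism.

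Next, on each convex piece $P$ obtained, I would invoke E.~Milman's equivalence between Neumann spectral gap and exponential concentration for convex bodies: for any two Borel subsets $B_0,B_1\subseteq P$ of normalized $P$-masses $\beta_0,\beta_1$,
\begin{equation*}
\dist_P(B_0,B_1)\lesssim \frac{1}{\sqrt{\lambda_1^N(P)}}\log\frac{1}{\beta_0\beta_1}.
\end{equation*}
The crucial feature of Gromov's construction is that one obtains a universal lower bound $\lambda_1^N(P)\gtrsim \lambda_k^N(\Omega)$, arising because the bisection is performed via the first $k$ eigenfunctions of $\Omega$ (this is the content of Proposition \ref{bisect:prop}). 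Applied with $B_0=A_i\cap P$ and $B_1=A_j\cap P$ for a good index pair $i\neq j$, this bounds $d\lesssim \lambda_k^N(\Omega)^{-1/2}\log(\kappa_i\kappa_j)^{-1}$ with an absolute constant, establishing the $l=k$ case.

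For $l<k$, I would augment the configuration $A_0,\ldots,A_l$ up to $k+1$ sets by performing additional ham-sandwich bisections of $\Omega$ and using the resulting sub-pieces as ``auxiliary'' separated sets filling out the missing slots $A_{l+1},\ldots,A_k$. Each extra bisection refines the partition by one layer and dilates the constant by a universal factor $c$, absorbing one more eigenfunction's worth of exponential-concentration tail. After $k-l$ such refinements the multiplicative constant becomes $c^{k-l+1}$, which is precisely the factor in the theorem.

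The main obstacle I anticipate is the spectral transfer in Step 2: quantitatively propagating the spectral gap from $\Omega$ to each convex subdomain produced by the bisection, while keeping the loss a \emph{universal} constant independent of $k$. This is the heart of Gromov's technique and relies crucially on convexity (so that the ham-sandwich pieces are themselves convex and E.~Milman's equivalence applies piece-wise) together with a careful choice of test functions to plug into the variational characterization of $\lambda_1^N(P)$. Once that geometric-spectral transfer is in place, the remainder of the argument is bookkeeping over the partition.
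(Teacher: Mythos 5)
Theorem \ref{reduction:thm} is quoted from \cite[Theorem 1]{funafuna}; the present paper does not reprove it, so your plan has to stand as a self-contained argument, and as written it contains two structural errors. You claim the zero set of a combination $\phi=c_0+\sum c_i\phi_i$ of the first $k+1$ eigenfunctions ``bisects $\Omega$ into convex pieces.'' But such a zero set is a generic hypersurface, not a union of hyperplanes, and the sign domains $\{\phi>0\}$, $\{\phi<0\}$ are in general neither convex nor even connected. That disqualifies the piecewise application of E.~Milman's concentration-to-spectral-gap equivalence, which requires convexity of the domain it acts on. You also invoke a ``universal lower bound $\lambda_1^N(P)\gtrsim\lambda_k^N(\Omega)$'' for the pieces $P$ and attribute it to Proposition \ref{bisect:prop}, but that proposition runs the other way: from a covering of $\Omega$ by pieces with Cheeger constants $\geq h$ and multiplicity $\leq M$ it gives $\lambda_l^N(\Omega)\geq h^2/(4M^2)$, a lower bound on the \emph{ambient} eigenvalue, not a lower bound on the spectral gap of any piece. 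No such transfer exists in general: a sign domain of a high eigenfunction can be long and thin, with Neumann spectral gap far smaller than $\lambda_k^N(\Omega)$.

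The reduction from $k$ to $l$ is left essentially blank, and that is exactly where the substance of \cite{funafuna} lies. The base case $l=k$ is the Chung--Grigor'yan--Yau multi-part concentration estimate, which the paper notes holds with no convexity assumption; the nontrivial claim is that one can drop to $l+1<k+1$ sets at a cost of only $c^{k-l}$ with $c$ an \emph{absolute} constant, and convexity (via localization for log-concave measures) is precisely what makes that factor dimension-free. ``Perform additional ham-sandwich bisections and use the sub-pieces as auxiliary separated sets'' does not explain where the auxiliary sets acquire a mass lower bound comparable to $\kappa_i$, why they remain $d$-separated from the originals and from each other, or why the accumulated loss stays an absolute power of $c$ rather than picking up dimension-dependent factors.
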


   The case where $k=l=1$ was first proved by Gromov and V.~Milman
   without convexity assumption of domains (\cite{milgro}). Chung, Grigor'yan, and Yau
   then extended to the case where $k=l$ (\cite{cgy1,cgy2}). To reduce the number $l$ of
   subsets in $\Omega$ in a dimension-free way we need the convexity of
   $\Omega$ (see \cite{funafuna}).

%   \begin{align*}
%    \lambda^N_k(\Omega)\leq \frac{n}{n+2}\frac{4\pi^2 k^{2/n}}{(\omega_n |\Omega|)^{2/n}}
%    \end{align*}
\subsection{Cheeger constant and eigenvalues of the Laplacian}
    For a Borel subset $A\subseteq \Omega$ and $r>0$ we
denote $U_r(A)$ the $r$-neighborhood of $A$ in $\Omega$. We define the \emph{Minkowski
boundary measure} of $A$ as
\begin{align*}
 \mu_{+}(A):=\liminf_{r\to 0} \frac{\mu(U_r(A) \setminus A)}{r}. 
 \end{align*}
  \begin{dfn}[{Cheeger constant}]\upshape For a bounded domain $\Omega$ in a
   Euclidean space we define the \emph{Cheeger constant} of $\Omega$ as
  \begin{align*}
   h(\Omega):= \inf_{A_0,A_1} \max \{ 
   \mu_{+}( A_0)/\mu(A_0),\mu_{+}(A_1)/\mu(A_1) \},
   \end{align*}where the infimum runs over all non-empty disjoint two Borel subsets $A_0,A_1$ of $\Omega$. 
  \end{dfn}
Let $\mu$ be a finite Borel measure on a bounded domain
$\Omega\subseteq \mathbb{R}^n$ and $f:\Omega\to \mathbb{R}$ be a Borel
measurable function. A real number $m_f$ is called a \emph{median} of
$f$ if it satisfies 
\begin{align*}
 \mu(\{x\in \Omega\mid f(x)\geq m_f\})\geq \mu(\Omega)/2 \text{ and } \mu(\{x\in \Omega\mid f(x)\leq m_f\})\geq \mu(\Omega)/2.
 \end{align*}
 The following characterization of the Cheeger constant is due to Maz'ya
 and Federer-Fleming. See \cite[Lemma 2.2]{milman1} for example.
  \begin{thm}[{\cite{ff60}, \cite{mazya}}]\label{11poin}The Cheeger constant $h(\Omega)$ is the best constant for the following
   $(1,1)$-Poincar\'e inequality:
   \begin{align*}
    h(\Omega) \| f-m_f\|_{L^1(\Omega,\mu)}\leq \| |\nabla f|\|_{L^1(\Omega,\mu)}
    \text{ for any }f\in C^{\infty}(\Omega). 
    \end{align*}
   \end{thm}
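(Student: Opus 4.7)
The plan is to establish the two directions of the equivalence separately using the coarea formula and an approximation of characteristic functions of Cheeger sets.

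First, I would show that $h(\Omega)$ is an admissible constant in the $(1,1)$-Poincaré inequality. Given smooth $f$ with median $m_f$, I shift so that $m_f=0$ and apply the coarea formula
\begin{equation*}
  \int_{\Omega} |\nabla f|\,d\mu = \int_{-\infty}^{\infty} \mu_{+}(\{f>t\})\,dt.
\end{equation*}
The median condition gives $\mu(\{f>t\})\le \mu(\Omega)/2$ for $t>0$ and $\mu(\{f\le t\})\le \mu(\Omega)/2$ for $t<0$. Pairing each level set with its complement as $(A_0,A_1)$ in the definition of $h(\Omega)$, and noting that the smaller piece realizes the maximum in the definition while $\mu_{+}$ is the same for a set and its complement inside $\Omega$, yields $\mu_{+}(\{f>t\})\ge h(\Omega)\mu(\{f>t\})$ for $t>0$ and $\mu_{+}(\{f>t\})\ge h(\Omega)\mu(\{f\le t\})$ for $t<0$. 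Integrating in $t$ and using the layer-cake formula turns the right-hand side into $h(\Omega)(\|f^{+}\|_{L^1}+\|f^{-}\|_{L^1})=h(\Omega)\|f-m_f\|_{L^1}$, which gives the inequality with constant $h(\Omega)$.

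For the reverse direction, I would show that no constant strictly larger than $h(\Omega)$ can work. Fix disjoint Borel $A_0,A_1$; by symmetry of the definition I may assume $\mu(A_0)\le \mu(A_1)$, and moreover I may replace $A_1$ by $\Omega\setminus A_0$, which only decreases $\mu_{+}(A_1)/\mu(A_1)$, so the problem reduces to bounding $\mu_{+}(A)/\mu(A)$ from below by the Poincaré constant for a Borel set $A$ with $\mu(A)\le \mu(\Omega)/2$. I would then approximate $\mathbf{1}_A$ from above by a sequence of smooth functions $f_\varepsilon$ vanishing outside the $\varepsilon$-neighborhood $U_\varepsilon(A)$ and equal to $1$ on $A$, with $|\nabla f_\varepsilon|\le 1/\varepsilon$. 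Since $\mu(A)\le \mu(\Omega)/2$, one can arrange $m_{f_\varepsilon}=0$. Then $\|f_\varepsilon\|_{L^1}\to \mu(A)$ while $\int|\nabla f_\varepsilon|\,d\mu\to \mu_{+}(A)$ as $\varepsilon\downarrow 0$, so applying the hypothetical Poincaré inequality to $f_\varepsilon$ and passing to the limit yields $C\mu(A)\le \mu_{+}(A)$. Taking the infimum over $A$ gives $C\le h(\Omega)$.

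The only genuine obstacle is the second step: a cavalier choice of smooth mollifier might have $\int |\nabla f_\varepsilon|\,d\mu$ converge to a quantity strictly larger than $\mu_{+}(A)$ (e.g., twice it). This is handled by choosing $f_\varepsilon$ to be the one-sided cutoff $f_\varepsilon(x)=\max\{0,1-\dist(x,A)/\varepsilon\}$, for which the coarea formula and the definition of $\mu_{+}$ as a $\liminf$ of $\mu(U_r(A)\setminus A)/r$ deliver exactly $\int|\nabla f_\varepsilon|\,d\mu=\varepsilon^{-1}\int_0^\varepsilon \mu(U_r(A)\setminus A)\,dr\to \mu_{+}(A)$ along a suitable subsequence; a standard mollification makes the function smooth with an arbitrarily small additional error.
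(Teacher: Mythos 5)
The paper does not actually prove Theorem~\ref{11poin}: it is cited from Federer--Fleming and Maz'ya (with a pointer to \cite[Lemma 2.2]{milman1}) and used as a black box, so there is no in-paper argument to compare your proposal against. On its own terms, what you wrote is the standard coarea/approximation proof that those references give, and the overall structure is correct: the level-set observation plus coarea handles the forward inequality, and the one-sided cutoff $\max\{0,\,1-\dist(\cdot,A)/\varepsilon\}$, applied along a subsequence realizing the $\liminf$ in the definition of $\mu_{+}$, handles sharpness.

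Two small points should be tidied up. First, the sentence ``I may replace $A_1$ by $\Omega\setminus A_0$, which only decreases $\mu_{+}(A_1)/\mu(A_1)$'' is both unnecessary and not justified: $\mu_{+}$ is not monotone under set inclusion, so enlarging $A_1$ need not decrease that ratio. The correct (and simpler) reduction is just that $\max\{\mu_{+}(A_0)/\mu(A_0),\mu_{+}(A_1)/\mu(A_1)\}\ge \mu_{+}(A_0)/\mu(A_0)$ with $\mu(A_0)\le\mu(\Omega)/2$, so it suffices to bound $\mu_{+}(A)/\mu(A)$ from below over sets of at most half measure; the converse containment, needed in the forward direction, uses partitions and the fact that complementary sets in $\Omega$ share the same Minkowski boundary measure, which you use correctly there. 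Second, asserting $m_{f_\varepsilon}=0$ requires $\mu(U_\varepsilon(A))\le\mu(\Omega)/2$, which for small $\varepsilon$ needs $\mu(\overline{A})<\mu(\Omega)/2$; the borderline case $\mu(\overline{A})=\mu(\Omega)/2$ requires a short extra argument (e.g.\ passing to the complement or approximating $A$ by slightly smaller sets). Relatedly, the exact identity is $\int|\nabla f_\varepsilon|\,d\mu=\varepsilon^{-1}\mu\bigl(U_\varepsilon(A)\setminus\overline{A}\bigr)$, whose subsequential limit is $\mu_{+}(\overline{A})\le\mu_{+}(A)$; this is harmless, but worth stating so the conclusion $C\mu(A)\le\mu_{+}(A)$ is clean. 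With these adjustments the argument is correct and matches the classical proof in the cited sources.
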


  \begin{thm}[{E.~Milman \cite[Theorem 2.1]{milman2}}]\label{emil:thm}Let $\Omega $ be a
   bounded convex domain in a Euclidean space and assume that $\Omega$ satisfies
   the following concentration inequality for some $r>0$ and $\kappa\in
   (\, 0,1/2\,)$ : $\mu(\Omega \setminus U_{r}(A))\leq \kappa$ for any
   Borel subset $A\subseteq \Omega$ such that $\mu(A)\geq 1/2$. Then $h(\Omega)\geq (1-2\kappa)/r$.
   \end{thm}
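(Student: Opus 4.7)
My plan is to derive the Cheeger bound by reducing it to a one-set isoperimetric inequality and then exploiting the convexity of $\Omega$ to pass from the scale-$r$ concentration data to an infinitesimal boundary bound. The reduction is standard: for any non-empty disjoint Borel pair $A_0, A_1$, one of the two sets has $\mu$-measure at most $1/2$, so $h(\Omega)$ equals $\inf\{\mu_+(A)/\mu(A) : A\subseteq\Omega \text{ Borel}, \; 0 < \mu(A) \leq 1/2\}$. The task therefore becomes: for every such $A$, show $\mu_+(A) \geq \frac{1-2\kappa}{r}\mu(A)$.

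Given such an $A$, I would apply the concentration hypothesis to $A^c$ (which has measure at least $1/2$) to get $\mu(\Omega \setminus U_r(A^c)) \leq \kappa$; that is, the set $\{x \in A : \dist(x, A^c) > r\}$ of points deep inside $A$ has measure at most $\kappa$, and consequently the $r$-shell $\{x \in A : \dist(x, A^c) \leq r\}$ carries at least $\mu(A) - \kappa$ of the mass of $A$. The next task is to convert this scale-$r$ shell-volume bound into the infinitesimal Minkowski boundary measure $\mu_+(A)$. This is where the convexity of $\Omega$ is essential: the uniform measure on a convex domain is log-concave, and via a Brunn--Minkowski / Pr\'ekopa--Leindler argument (or, equivalently, the heat-semigroup comparison used by E.~Milman in \cite{milman1,milman2}), one can bound the shell volume from above by $r\,\mu_+(A)$, yielding the desired isoperimetric inequality at the balanced scale $\mu(A) = 1/2$.

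The main obstacle is this last conversion combined with the need to handle small $\mu(A)$: when $\mu(A)$ is close to $\kappa$, the raw bound $\mu(A) - \kappa$ is much smaller than $(1-2\kappa)\mu(A)$, so the direct concentration argument alone does not suffice. To extend the bound to all $\mu(A) \leq 1/2$, I would invoke the concavity of the isoperimetric profile $v \mapsto I(v) := \inf_{\mu(A)=v}\mu_+(A)$ on a convex domain (again a consequence of the log-concavity of $\mu$), which forces $I(v)/v$ to be non-increasing in $v \in (0, 1/2]$, so that the Cheeger quotient is minimised at the balanced value $v = 1/2$, where the concentration data translate directly into the sharp bound $\frac{1-2\kappa}{r}$. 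The delicate convexity argument behind this profile concavity, rather than the concentration hypothesis itself, is what does the real work.
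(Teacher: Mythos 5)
This theorem is not proved in the paper --- it is quoted from E.~Milman's work \cite{milman2} --- so there is no internal argument for your proposal to track. On its own terms, your outer reductions are sound: passing to a one-set Cheeger quotient over Borel sets of measure at most $1/2$, applying the concentration hypothesis to $A^c$ to get $\mu(\{x\in A:\dist(x,A^c)\le r\})\ge\mu(A)-\kappa$, and reducing to the balanced scale $v=1/2$ via concavity of the isoperimetric profile are all steps a correct proof would make.

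The gap is in the pivotal claim that the shell volume is bounded above by $r\,\mu_+(A)$. By the coarea formula the shell volume equals (up to the usual liminf caveat) $\int_0^r P(A_{-t})\,dt$, where $A_{-t}$ is the inner erosion of $A$ by $t$ and $P$ its perimeter; deducing the bound $r\,\mu_+(A)$ requires $P(A_{-t})\le P(A_0)=\mu_+(A)$ for all $t\in[0,r]$. That monotonicity holds when $A$ is convex, but it is false for a general Borel competitor $A$ in the Cheeger infimum: one can easily produce sets whose perimeter grows under erosion, and for such sets the shell carries far more than $r\,\mu_+(A)$ of the mass. Making the step rigorous requires either knowing that isoperimetric minimizers for a log-concave measure can be taken convex (or half-space-like), or running the semigroup machinery E.~Milman actually employs in \cite{milman1,milman2}; a gesture at Brunn--Minkowski / Pr\'ekopa--Leindler does not produce the inequality. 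Likewise, the concavity of the isoperimetric profile that you invoke at the end is itself a substantial theorem (Bobkov, Sternberg--Zumbrun, Bayle), not a free consequence of log-concavity. You candidly say ``the delicate convexity argument \ldots is what does the real work,'' and indeed those two lemmas essentially \emph{are} the theorem; the sketch locates the difficulty correctly but does not close it.
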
One can easily check that
   Theorem \ref{emil:thm} has the following equivalent interpretation in terms
   of separation distance.
   \begin{prop}\label{emil:prop}Let $\Omega$ be a convex domain in a Euclidean
    space. Then for any $\kappa\in (\,0,1/2\,)$ we have
    \begin{align*}
     \sep(\Omega;\kappa,1/2)\geq (1-2\kappa)/h(\Omega).
     \end{align*}In particular we have
    \begin{align*}
    \diam \Omega \geq 1/h(\Omega).
     \end{align*}
    \end{prop}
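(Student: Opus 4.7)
The proposition is presented as an ``equivalent interpretation'' of Theorem \ref{emil:thm}, so the plan is to derive it by taking the contrapositive of E.~Milman's concentration statement and unpacking the definition of separation distance.

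Concretely, fix $\kappa\in(0,1/2)$ and pick any $r<(1-2\kappa)/h(\Omega)$, so that $h(\Omega)<(1-2\kappa)/r$. By the contrapositive of Theorem \ref{emil:thm}, the concentration inequality at parameters $(r,\kappa)$ must fail: there exists a Borel set $A\subseteq\Omega$ with $\mu(A)\geq 1/2$ and $\mu(\Omega\setminus U_r(A))>\kappa$. Setting $A_0:=\Omega\setminus U_r(A)$ and $A_1:=A$, we have $\mu(A_0)\geq\kappa$, $\mu(A_1)\geq 1/2$, and by the very definition of $U_r(A)$ every point of $A_0$ is at distance $\geq r$ from $A_1$, so $\dist_\Omega(A_0,A_1)\geq r$. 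Feeding this pair into Definition \ref{2.2d1} gives $\sep(\Omega;\kappa,1/2)\geq r$. Letting $r\nearrow(1-2\kappa)/h(\Omega)$ yields the first inequality.

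For the diameter bound, I would observe that $\sep(\Omega;\kappa_0,\kappa_1)\leq\diam\Omega$ holds trivially whenever the defining family $A_0,A_1$ is non-empty (which it is as soon as $\kappa_0,\kappa_1\leq\mu(\Omega)=1$). Combining with the just-proved inequality gives $\diam\Omega\geq (1-2\kappa)/h(\Omega)$ for every $\kappa\in(0,1/2)$, and sending $\kappa\to 0^+$ produces $\diam\Omega\geq 1/h(\Omega)$.

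There is essentially no obstacle: once Theorem \ref{emil:thm} is taken as a black box, the whole statement is a translation between the ``enlargement'' formulation (complement of an $r$-neighborhood has small measure) and the ``separation'' formulation (two disjoint positive-measure sets lie at distance $\geq r$). The only minor care needed is the direction of the inequalities in the contrapositive and the fact that, in the definition of $\sep$, weak inequalities $\mu(A_i)\geq\kappa_i$ suffice, so the strict inequality $\mu(\Omega\setminus U_r(A))>\kappa$ coming from the failure of concentration is exactly what we need.
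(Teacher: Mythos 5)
Your proof is correct and fills in precisely the elementary contrapositive argument that the paper leaves implicit with its remark ``One can easily check''; the translation from the concentration formulation (small complement of an $r$-enlargement) to the separation formulation (two sets of prescribed measure at distance $\geq r$) is exactly the intended route, and the limits $r\nearrow(1-2\kappa)/h(\Omega)$ and $\kappa\to 0^+$ are handled correctly.
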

    The latter statement can
    be found in \cite[Theorem 5.1]{kls} and \cite[Theorem 5.12]{milman1} up to some
    absolute constant.
    \begin{thm}[{\cite[Theorem 1.1]{kr99}, \cite{cheng}}]\label{diam est1}Let $\Omega$ be a bounded convex domain in
     $\mathbb{R}^n$ with piecewise smooth boundary. For any natural number $k$
     we have
     \begin{align*}
      \diam \Omega \lesssim n k/\sqrt{\lambda^N_k(\Omega)}.
      \end{align*}
     \end{thm}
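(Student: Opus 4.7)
The assertion is equivalent to $\lambda^N_k(\Omega)\lesssim (nk/D)^2$ with $D:=\diam\Omega$, and I would prove this by a test function construction combined with the Brunn--Minkowski inequality and the Courant--Fischer min-max principle, along the lines of Kröger. Fix $p,q\in\overline\Omega$ with $|p-q|=D$, let $\pi\colon\mathbb{R}^n\to\mathbb{R}$ be the orthogonal projection onto the line through $p$ and $q$, normalized so that $\pi(\Omega)\subseteq[0,D]$, and set $g(t):=\vol_{n-1}(\Omega\cap\pi^{-1}(t))$ and $V:=\vol\Omega=\int_0^D g\,dt$. By Brunn--Minkowski, $G:=g^{1/(n-1)}$ is concave and nonnegative on $[0,D]$; comparing $G$ with its triangular lower envelope peaked at $\operatorname{argmax}G$ yields $\int_0^D G^{n-1}\,dt\ge (\max G)^{n-1}D/n$, i.e.\ the pointwise bound $\sup_{[0,D]}g\le nV/D$.

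Partition $[0,D]$ into $k+1$ consecutive intervals $J_0,\dots,J_k$ of equal $g$-mass $V/(k+1)$, and set $L_i:=|J_i|$. The pointwise bound on $g$ forces $L_i\ge D/(n(k+1))\gtrsim D/(nk)$ for every $i$. On each $J_i$, let $\phi_i$ be the first eigenfunction of the 1D weighted Dirichlet problem $-(gu')'=\lambda g u$ on $J_i$ with $u(\partial J_i)=0$, and set $f_i(x):=\phi_i(\pi(x))$, extended by zero outside $\pi^{-1}(J_i)$. The $f_i$ lie in $H^1(\Omega)$ and have pairwise disjoint supports, so they span a $(k+1)$-dimensional subspace. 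By Courant--Fischer,
\[
\lambda^N_k(\Omega)\le \max_{0\le i\le k}\frac{\int_{J_i}\phi_i'(t)^2 g(t)\,dt}{\int_{J_i}\phi_i(t)^2 g(t)\,dt}=\max_{0\le i\le k}\widetilde\lambda_1^D(J_i,g).
\]

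The main remaining step, and the main obstacle, is to show $\widetilde\lambda_1^D(J_i,g)\lesssim (nk/D)^2$ for every $i$. On "middle" intervals, where the restriction of $g$ is comparable to a constant, this follows from the standard 1D Poincaré inequality, giving $\widetilde\lambda_1^D\sim 1/L_i^2\lesssim (nk/D)^2$. On the "boundary" intervals $J_0$ and $J_k$, where $G$ may vanish at an endpoint of $[0,D]$ and $g=G^{n-1}$ is accordingly skewed, an arbitrary log-concave weight would not admit a uniform bound of this form: one must genuinely use the special structure $g=G^{n-1}$ with $G$ concave. The worst case is $G$ linear on $J_i$, giving $g\propto t^{n-1}$ --- precisely the weight of the radial Laplacian on a Euclidean ball --- for which the first 1D Dirichlet eigenvalue on $J_i$ equals a squared Bessel zero of order $n^2/L_i^2$; in this configuration the boundary interval has length $L_i\gtrsim D/k^{1/n}$, so $n^2/L_i^2\lesssim n^2 k^{2/n}/D^2\le (nk/D)^2$ still fits. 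A comparison argument dominating $\widetilde\lambda_1^D(J_i,g)$ by its value for the worst linear profile with matching mass then completes the estimate.
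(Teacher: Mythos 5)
The paper does not prove this theorem; it is cited to Kr\"oger and Cheng, so there is no internal argument to compare against. Your plan does follow the general Kr\"oger strategy (project onto the diameter, invoke Brunn--Minkowski to make $g^{1/(n-1)}$ concave, cut $[0,D]$ into $k+1$ equal-$g$-mass intervals, plug disjointly supported pullbacks into Courant--Fischer), and steps $1$--$3$ of your outline are correct: in particular $\sup g\le nV/D$ follows from the tent-function lower bound on a concave function, and hence $L_i\gtrsim D/(nk)$.

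The genuine gap is the step you yourself flag as the main obstacle, namely $\widetilde\lambda_1^D(J_i,g)\lesssim (nk/D)^2$, and the dichotomy you sketch does not close it. First, the ``middle intervals where $g$ is comparable to a constant'' case is not established and in general is false in the sense you need: $g=G^{n-1}$ with $G$ concave vanishing at an end of $[0,D]$ can have oscillation $\max_{J_i}g/\min_{J_i}g$ as large as $2^{n-1}$ on interior intervals $J_i$ near that end, so the ``standard 1D Poincar\'e inequality'' with a sine test function gives a constant that is exponential in $n$, not absolute; to beat this one must use a weight-adapted test function (e.g.\ $u=G^{-(n-1)/2}\phi$ with $\phi$ supported in the middle third of $J_i$, where $|G'|/G\lesssim 1/L_i$), and this costs a factor $n^2$. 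Second, your ``boundary'' analysis is carried out only for the single profile $g\propto t^{n-1}$; the claimed length bound $L_i\gtrsim D/k^{1/n}$ is a feature of that particular profile, not of a general concave $G$ vanishing at an endpoint (one can have $G$ remain tiny on a long initial stretch, making $b_0$ as small as $\sim D/(nk)$). Third, the ``comparison argument dominating $\widetilde\lambda_1^D(J_i,g)$ by its value for the worst linear profile with matching mass'' is asserted but not defined or proved, and it is not a standard fact. Finally, even if one grants $\widetilde\lambda_1^D(J_i,g)\lesssim n^2/L_i^2$ on every piece, combining with $L_i\gtrsim D/(nk)$ yields $\lambda^N_k(\Omega)\lesssim n^4k^2/D^2$, i.e.\ $\diam\Omega\lesssim n^2k/\sqrt{\lambda_k^N}$, an extra factor of $n$; so the bookkeeping as organized cannot produce the stated bound without showing that intervals with eigenvalue $\sim n^2/L_i^2$ are necessarily the long ones, which is exactly the delicate point the sketch omits.
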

     The Buser-Ledoux inequality asserts that $\sqrt{\lambda^N_1(\Omega)}\gtrsim
     h(\Omega)$ for any bounded convex domain $\Omega\subseteq \mathbb{R}^n$
     with piecewise smooth boundary (\cite{buser}, \cite{led}). As a corollary of
     Theorem \ref{diam est1} we obtain
     \begin{align}\label{diam est}
      \diam \Omega \lesssim n/h(\Omega).
      \end{align}
  \subsection{Voronoi partition}
  Let $X$ be a metric space and $\{x_i\}_{i\in I}$ be a subset of
  $X$. For each $i\in I$ we define the \emph{Voronoi cell} $C_i$
  associated with the point $x_i$ as
  \begin{align*}
   C_i:= \{x\in X \mid \dist(x,x_i)\leq \dist(x,x_j) \text{ for all
   }j\neq i   \}.
   \end{align*}Note that if $X$ is a bounded convex domain $\Omega$ in a
   Euclidean space then $\{C_i\}_{i\in I}$ is a convex partition of
   $\Omega$ (the boundaries $\partial C_i$ may overlap each
   other). Observe also that if the balls $\{ B(x_i,r)\}_{i\in I}$ of
   radius $r$ covers $\Omega$ then $C_i \subseteq B(x_i,r)$, and thus
   $\diam C_i \leq 2r$ for any $i\in I$. 
    \section{Gromov's ham sandwich method}
    In this section we explain Gromov's ham sandwich method to estimate 
    eigenvalues of the Laplacian from below. Recall that the classical ham
    sandwich theorem in algebraic topology asserts that given three
    finite volume subsets in $\mathbb{R}^3$, there is a plane that
    bisects all these subsets (\cite{matousek}). In stead of bisecting by a plane we
    consider bisecting by the zero set of a finite combination of
    eigenfunctions of the Laplacian in Gromov's ham sandwich method.

    Let $\Omega$ be a bounded domain in a Euclidean space with piecewise smooth
    boundary and $\{A_i\}_{i=1}^{l}$ be a finite covering of $\Omega$;
    $\Omega=\bigcup_i A_i$.
    We denote by $M(\{A_i\})$ the maximal multiplicity of the covering
    $\{ A_i\}$ and by $h(\{ A_i \})$ the minimum of the Cheeger constants of
    $A_i$, $i=1,2,\cdots,l$.

    Although the following argument is
    essentially included in
    \cite[Appendix $C_+$]{gromov} we include the proof for the
    completeness of this paper.
    \begin{prop}[{Compare with \cite[Appendix $C_{+}$]{gromov}}]\label{bisect:prop}Under
     the above situation, we have
     \begin{align*}
      \lambda^N_l(\Omega)\geq \frac{h(\{A_i\})^2}{4M(\{A_i\})^2}.
      \end{align*}
     \begin{proof}[{Sketch of Proof}]We abbreviate $M:=M(\{A_i\})$ and
      $h:=h(\{A_i\})$. Take orthonormal eigenfunctions $f_1,f_2,\cdots,f_l$ which
      correspond to the eigenvalues $\lambda^N_1(\Omega),\lambda^N_2(\Omega),\cdots,\lambda^N_l(\Omega)$ respectively.
  \begin{step}\label{step1}\upshape Use the Borsuk-Ulam theorem to get constants
  $c_0,c_1,\cdots,c_{l}$ such that $f:=c_0 + \sum_{i=1}^{l}c_i f_i$
  bisects each $A_1, A_2,\cdots A_{l}$, i.e.,
   \begin{align*}
    \mu(A_i\cap
   f^{-1}[\, 0,\infty\,))\geq \mu(A_i)/2 \text{ and } \mu(A_i\cap
    f^{-1}(\,-\infty,0\,])\geq \mu(A_i)/2.
    \end{align*}In fact, according to \cite[Corollary]{stonetukey}, in
   order to
   bisect $l$ subsets by a finite combination of $f_0\equiv 1,
   f_1,\cdots,f_l$, it suffices to check that $f_0,f_1,\cdots, f_l$ are
   linearly independent modulo sets of measure zero (i.e., whenever
   $a_0f_0+a_1f_1+\cdots +a_lf_l=0$ over a Borel subset of positive
   measure, we have $a_0=a_1=\cdots =a_l=0$). This
   is possible since the zero set of any finite combination of
   $f_0,f_1,\cdots,f_l$ has finite codimension $1$ Hausdorff measure
   (\cite[Subsection 1.1.1]{bhh}).
   \end{step}
  \begin{step}\label{step2}\upshape Put $f_+(x):=\max\{ f(x),0\}$ and
   $f_{-}(x):=\max \{ -f(x),0\}$. Then we set $g_{\pm}:=f_{\pm}^2$. Note that $0$ is
      the median of the restriction of $g_{\pm}$ to each $A_i$ by Step
   \ref{step1}. Apply Theorem \ref{11poin} to get $h \| g_{\pm}\|_{L^1(A_i,\mu|_{A_i})}\leq
   \|| \nabla g_{\pm}|\|_{L^1(A_i,\mu|_{A_i})}$ for each $i$.
   \end{step}
   \begin{step}\upshape
    Use Step \ref{step2} to get
    \begin{align*}
     \int_{\Omega} g_{\pm} d\mu \leq \sum_{i=1}^{l}\int_{A_i} g_{\pm} d\mu  \leq
     \frac{1}{h}\sum \int_{A_i} | \nabla g_{\pm} | d\mu \leq
     \frac{M}{h}\int_{\Omega} | \nabla g_{\pm} | d\mu.
     \end{align*}
   \end{step}\hspace{-0.5cm}Recalling that $g_{\pm}=f_{\pm}^2$ and using the Cauchy-Schwarz
    inequality we have
    \begin{align*}
     \int_{\Omega} f_{\pm}^2 d\mu \leq \frac{4M^2}{h^2}\int_{\Omega} | \nabla
     f_{\pm} |^2 d\mu. 
     \end{align*}Since the zero set $f^{-1}(0)$ has measure zero we
      get
      \begin{align*}
         \int_{\Omega} f^2 d\mu \leq \frac{4M^2}{h^2}\int_{\Omega} | \nabla
     f |^2 d\mu. 
       \end{align*}We therefore obtain $\sum_{i=0}^{l}c_i^2 \leq
    (4M^2/h^2)\sum_{i=1}^l c_i^2\lambda^N_i(\Omega)$ and thus the
    conclusion of the proposition.
      \end{proof}
     \end{prop}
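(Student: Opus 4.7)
The plan is to combine a ham-sandwich bisection of the cover $\{A_i\}$ by the zero set of a suitable linear combination of low Neumann eigenfunctions with the Cheeger-type $(1,1)$-Poincar\'e inequality of Theorem \ref{11poin} applied piecewise, and then to turn the resulting $L^1$ estimate into an $L^2$ Rayleigh-quotient bound via a Cauchy-Schwarz upgrade.

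Concretely, I would fix $L^2(\Omega,\mu)$-orthonormal Neumann eigenfunctions $f_0\equiv 1, f_1,\ldots,f_l$ attached to the eigenvalues $0,\lambda^N_1(\Omega),\ldots,\lambda^N_l(\Omega)$ and look for coefficients $c_0,\ldots,c_l$, not all zero, such that $f:=\sum_{i=0}^l c_i f_i$ has $0$ as a median of its restriction to every $A_i$. With $l+1$ parameters and $l$ bisection constraints this is exactly what the measurable ham-sandwich theorem of Stone-Tukey delivers, provided $f_0,\ldots,f_l$ are linearly independent modulo null sets; this independence follows from the fact that the nodal set of any nontrivial linear combination of Neumann eigenfunctions has finite codimension-$1$ Hausdorff measure, hence Lebesgue measure zero. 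Splitting $f=f_+-f_-$ and setting $g_\pm:=f_\pm^2$, the bisection property forces $0$ to be a median of $g_\pm|_{A_i}$ on each piece, so Theorem \ref{11poin} on $A_i$ with constant $h:=h(\{A_i\})$ gives $h\int_{A_i}g_\pm\,d\mu\leq \int_{A_i}|\nabla g_\pm|\,d\mu$.

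Summing over $i$ and using that the multiplicity of the cover is at most $M:=M(\{A_i\})$ collapses the local inequalities into
\begin{equation*}
\int_\Omega g_\pm\,d\mu\leq \sum_i\int_{A_i}g_\pm\,d\mu\leq \frac{1}{h}\sum_i\int_{A_i}|\nabla g_\pm|\,d\mu\leq \frac{M}{h}\int_\Omega |\nabla g_\pm|\,d\mu.
\end{equation*}
A single Cauchy-Schwarz on the right, via $|\nabla g_\pm|=2f_\pm|\nabla f_\pm|$, upgrades this to $\int_\Omega f_\pm^2\,d\mu\leq (4M^2/h^2)\int_\Omega|\nabla f_\pm|^2\,d\mu$. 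Since the nodal set of $f$ has measure zero, adding the two signs and substituting the $L^2$-orthonormal expansion together with the spectral identity $\int_\Omega|\nabla f|^2\,d\mu=\sum_{i=1}^l c_i^2\lambda^N_i(\Omega)$ reduces everything to $\sum_{i=0}^l c_i^2\leq (4M^2/h^2)\lambda^N_l(\Omega)\sum_{i=0}^l c_i^2$, which is the desired bound.

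The step I expect to be the main subtlety is the Stone-Tukey/Borsuk-Ulam application: one must verify the measurable ham-sandwich hypothesis, which in turn forces one to recall the nodal-set regularity of Neumann eigenfunctions, and one must be careful that the constant function $f_0\equiv 1$ is what provides the extra parameter needed to bisect $l$ sets rather than $l-1$. Once that is in place, the Cheeger-to-Poincar\'e step on each piece, the bookkeeping by the multiplicity, and the final Cauchy-Schwarz are routine packaging around Theorem \ref{11poin} and the Rayleigh-quotient characterization of $\lambda^N_l(\Omega)$.
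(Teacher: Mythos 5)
Your proposal reproduces the paper's proof essentially verbatim: the same Stone--Tukey bisection of the cover by a combination $c_0 + \sum_{i=1}^l c_i f_i$ (justified by nodal-set regularity), the same piecewise application of the Cheeger $(1,1)$-Poincar\'e inequality to $g_\pm = f_\pm^2$ with the multiplicity $M$ controlling the overlap, and the same Cauchy--Schwarz upgrade from the $L^1$ bound to the Rayleigh-quotient bound. The only cosmetic difference is that you write the final step directly in the form $\sum_{i=0}^l c_i^2 \leq (4M^2/h^2)\lambda^N_l(\Omega)\sum_{i=0}^l c_i^2$, whereas the paper first records $\sum_{i=0}^l c_i^2 \leq (4M^2/h^2)\sum_{i=1}^l c_i^2\lambda^N_i(\Omega)$ and then bounds $\lambda^N_i \leq \lambda^N_l$; these are equivalent.
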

     \begin{rem}\upshape 1. In \cite{gromov} Gromov treated the case where
      $\Omega$ is a closed Riemannian manifold of Ricci curvature $\geq
      -(n-1)$ and the covering consists of some balls $B_i$ of radius $\varepsilon$ in $\Omega$. In stead of considering the $(1,1)$-Poincar\'e
      inequality in terms of Cheeger constants in Step \ref{step2} he proved that
      $\|g\|_{L^1(B_i,\mu|_{B_i})} \leq c(n,\varepsilon)\| |\nabla
      g|\|_{L^1(\widetilde{B}_i,\mu|_{\widetilde{B}_i})}$, where
      $g=f^2$, $c(n,\varepsilon)$ is a constant depending only on dimension $n$
      and $\varepsilon$, and $\widetilde{B}_i$ is the ball of radius
      $2\varepsilon$ with the same center of $B_i$.

      2. The above proposition is also valid for the case where $\Omega$
      is a closed Riemannian manifold or a compact Riemannian manifold with
      boundary. In the latter case we impose the Neumann boundary condition.
      \end{rem}

      As an application of Proposition \ref{bisect:prop} we can obtain
      estimates of eigenvalues of the Laplacian of closed hyperbolic
      manifolds due to Buser (\cite[Theorems 3.1, 3.12, 3.14]{buser2}).
      In fact Buser gave a partition of a closed hyperbolic manifold
      and lower bound estimates of Cheeger constants of each piece of the partition. 
    \section{Proof of main theorems}
    Let $\Omega,\Omega'$ be two bounded convex domains in a Euclidean
    space. Throughout this section $\mu$ is the Lebesgue measure on
    $\Omega'$ normalized as $\mu(\Omega')=1$.
  \begin{proof}[Proof of Theorem
   \ref{Mthm1}]We apply Gromov's ham sandwich method (Proposition \ref{bisect:prop}) to
   bound $\lambda^N_{k-1}(\Omega)$ from below in terms of
   $\lambda^N_k(\Omega')$. To apply the proposition we want to find a
   finite partition $\{ \Omega_i\}_{i=1}^{l}$ of $\Omega$ with $l\leq
   k-1$ such that the Cheeger constant of each $\Omega_i$ can be comparable
   with $\sqrt{\lambda^N_k(\Omega')}$.

   According to Theorem \ref{reduction:thm} we have
   \begin{align}\label{k sep ineq}
    \sep\Big(\Omega';\underbrace{\frac{1}{k^n},\frac{1}{k^n},
    \cdots,\frac{1}{k^n}}_{k\text{ times}}\Big)\leq \frac{c n \log k}{\sqrt{\lambda^N_k(\Omega')}}
   \end{align}for some absolute constant $c>0$. We set $R:=(c n \log k) /\sqrt{\lambda^N_k(\Omega')}$.

   Suppose that $\Omega'$ includes $k$ ($4R$)-separated points
   $x_1,x_2,\cdots,x_{k}$. By Theorem \ref{diam est1} we have $\diam \Omega' \leq
   c'n k/ \sqrt{\lambda^N_k(\Omega')}$ for some absolute constant
   $c'>0$. Applying the Bishop-Gromov inequality we have
   \begin{align*}
    \mu (B(x_i,R))\geq (R/\diam \Omega')^n\geq (c\log k)^n/(c'k)^n
    \end{align*}for each $i$. If we rechoose $c$ in (\ref{k sep ineq})
   as a sufficiently large absolute constant so that $(c \log k)/c'\geq
   1$ we get $\mu(B(x_i,R))\geq 1/k^n$. Since $B(x_i,R)$'s are $2R$-separated this
   contradicts to (\ref{k sep ineq}). 

   Let $y_1,y_2,\cdots,y_l$ be maximal $4R$-separated points in
   $\Omega'$, where $l\leq k-1$. Since $\Omega'\subseteq
   \bigcup_{i=1}^{l} B(y_i,4R)$ if $\{
   \Omega_i' \}_{i=1}^{l}$ is the Voronoi partition associated with
   $\{y_i\}$ then we have $\diam \Omega_i'\leq 8R$. Setting
   $\Omega_i:=\Omega_i'\cap \Omega$ we get
   $\Omega=\bigcup_{i=1}^l\Omega_i$ and $\diam \Omega_i\leq 8R$. Since each
   $\Omega_i$ is convex, Proposition \ref{emil:prop} gives
   $h(\Omega_i)\geq 1/(8R)$. Applying Proposition \ref{bisect:prop} to
   the covering $\{\Omega_i\}$ we obtain
   \begin{align*}
    \lambda^N_{k-1}(\Omega)\geq \lambda^N_l(\Omega)\geq 1/\{4(8 R)^2\}\geq
    \lambda^N_k(\Omega')/(16cn\log k)^2,
    \end{align*}which yields the conclusion of the theorem. This completes the proof.
  \end{proof}

  In order to prove Theorem \ref{Mthm2} we prepare several lemmas.
  \begin{lem}[{\cite[Lemma 5.2]{milman1}}]\label{lem2:mthm2}Let $\Omega,\Omega'$
   be two bounded convex domains in $\mathbb{R}^n$ such that
   $\Omega\subseteq \Omega'$. Assume that $\vol
   \Omega\geq v\vol \Omega'$. Then we have $h(\Omega')\geq v^2h(\Omega)$.
   \end{lem}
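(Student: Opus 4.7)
The plan is to leverage the $(1,1)$-Poincar\'e characterization of the Cheeger constant (Theorem \ref{11poin}), which reduces the inequality $h(\Omega') \geq v^2 h(\Omega)$ to establishing
\[
v^2 h(\Omega)\,\|f - m\|_{L^1(\Omega',\mu)} \leq \||\nabla f|\|_{L^1(\Omega',\mu)}
\]
for every $f \in C^\infty(\Omega')$ and some constant $m$; here $\mu$ is the normalized Lebesgue measure on $\Omega'$ (as in Section 4). Since a $\mu$-median of $f$ minimizes $\|f-c\|_{L^1(\Omega',\mu)}$ among real constants $c$, we may replace the median by any convenient constant. I would choose $m$ to be a median of the restriction $f|_\Omega$ with respect to the normalized Lebesgue measure $\mu_\Omega$ on $\Omega$.

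Split $\int_{\Omega'}|f-m|\,d\mu = \int_\Omega |f-m|\,d\mu + \int_{\Omega'\setminus\Omega}|f-m|\,d\mu$. For the first piece, apply Theorem \ref{11poin} to $\Omega$ with measure $\mu_\Omega$ and median $m$; since on $\Omega$ one has $d\mu = (\vol\Omega/\vol\Omega')\,d\mu_\Omega$, this yields
\[
h(\Omega)\int_\Omega |f-m|\,d\mu \leq \int_\Omega |\nabla f|\,d\mu \leq \int_{\Omega'}|\nabla f|\,d\mu,
\]
which already absorbs one factor of the volume ratio. For the second piece I would use the convexity of both domains: for each $x \in \Omega'\setminus\Omega$ and each $y \in \Omega$, the line segment from $y$ to $x$ lies in $\Omega'$, so the fundamental theorem of calculus represents $|f(x)-f(y)|$ as the integral of $|\nabla f|$ along that segment. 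Averaging over $y\in\Omega$ against $\mu_\Omega$ and applying Fubini with the change of variables $(y,t)\mapsto y+t(x-y)$ allows one to bound $\int_{\Omega'\setminus\Omega}|f-m|\,d\mu$ by a multiple of order $1/v$ times $\int_\Omega |f-m|\,d\mu + \int_{\Omega'}|\nabla f|\,d\mu$. Combining this with the first estimate yields the desired Poincar\'e inequality with constant of order $v^2 h(\Omega)$.

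The main obstacle is the sweeping step on $\Omega'\setminus\Omega$. The convexity of $\Omega$ and $\Omega'$ is essential here, because only convexity guarantees that the segments from $\Omega$ to $\Omega'\setminus\Omega$ remain inside $\Omega'$ (so that $|\nabla f|$ along them is controlled by $\||\nabla f|\|_{L^1(\Omega',\mu)}$) and that the Jacobian of the change of variables admits a uniform bound depending only on $v$. The two powers of $v$ in the final constant arise in two distinct places: one from converting between $\mu$ and $\mu_\Omega$ on $\Omega$, and a second from the Fubini/change-of-variables estimate that propagates the $L^1$-control from $\Omega$ out to the exterior piece $\Omega'\setminus\Omega$.
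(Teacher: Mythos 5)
The paper does not prove this lemma; it is quoted directly from E.~Milman's paper \cite[Lemma~5.2]{milman1}, so there is no in-paper argument to compare against. Your opening moves --- pass to the $(1,1)$-Poincar\'e formulation via Theorem~\ref{11poin}, take the median of $f|_{\Omega}$ as the competitor constant, and split the $L^{1}$ integral into the $\Omega$ and $\Omega'\setminus\Omega$ pieces --- are reasonable. One bookkeeping slip first: the $(1,1)$-Poincar\'e inequality is invariant under multiplying the reference measure by a constant, so passing from $\mu_{\Omega}$ to $\mu|_{\Omega}$ costs nothing; you simply get $h(\Omega)\int_{\Omega}|f-m|\,d\mu\le\int_{\Omega}|\nabla f|\,d\mu$ with no loss. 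Thus the first of your two claimed sources of a power of $v$ is illusory, and both powers would have to come from the exterior piece. (Your final display is also dimensionally inconsistent: $\int|f-m|$ and $\int|\nabla f|$ cannot be added without a length factor.)

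The real gap is the sweeping step on $\Omega'\setminus\Omega$. Writing $|f(x)-f(y)|\le\int_{0}^{1}|\nabla f((1-t)y+tx)|\,|x-y|\,dt$, averaging over $y\in\Omega$, and changing variables $z=(1-t)y+tx$ produces the Jacobian $(1-t)^{-n}$, and $\int_{0}^{1}(1-t)^{-n}\,dt$ diverges for every $n\ge1$. One can try to save it by using that the image $(1-t)\Omega+tx$ shrinks as $t\to1$, but careful bookkeeping then yields constants of the form $(1+(V_{0}/V)^{1/n})^{n-1}$ with $V_{0}=\vol(\Omega'\setminus\Omega)$, $V=\vol\Omega$, which blow up with $n$ at fixed $v$. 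This is the familiar obstruction: the Riesz-potential/sweeping proof of the $L^{1}$-Poincar\'e inequality on a convex body gives constants like $\diam^{\,n+1}/\vol$, not the dimension-free $\diam$. So the sentence ``the Jacobian of the change of variables admits a uniform bound depending only on $v$'' is precisely the claim that fails, and it is the crux. The lemma, as stated, is genuinely dimension-free; Milman's proof does not run through a pointwise sweeping estimate but through his machinery equating (up to universal constants, using log-concavity) the Cheeger constant with weaker concentration-type functionals, which is where convexity enters in a dimension-free way. As written, your sketch would at best yield $h(\Omega')\ge c(n,v)\,h(\Omega)$ with $c(n,v)\to0$ as $n\to\infty$, which is strictly weaker than the statement.
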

  \begin{lem}\label{lem:mthm2}Let $\Omega, \Omega'$ be two bounded convex
   domains in $\mathbb{R}^n$ having piecewise smooth boundaries such that
   $\Omega\subseteq \Omega'$. Assume that $\vol
   \Omega \geq (1-k^{-n})\vol \Omega'$ for some natural number $k\geq 2$. Then we have
   \begin{align*}
    (n^2\log k)^2 \lambda^N_{k-1}(\Omega')\gtrsim \lambda^N_k(\Omega).
    \end{align*}
   \begin{proof}
    Due to Theorem \ref{reduction:thm} we have
   \begin{align}\label{k-1 sep}
    \sep\Big(\Omega;\underbrace{\frac{1}{k^n},\frac{1}{k^n},
    \cdots,\frac{1}{k^n}}_{k\text{ times}}\Big)\leq \frac{c n \log k}{\sqrt{\lambda^N_{k}(\Omega)}}.
    \end{align}We set $R:=(cn^2\log k) / \sqrt{\lambda^N_{k}(\Omega)}$. As in the proof of Theorem \ref{Mthm1} we have maximal
   $4R$-separated points $x_1,x_2,\cdots,x_l\in {\Omega}$ such
   that $l\leq k-1$. We get $\Omega\subseteq \bigcup_{i=1}^l
   B(x_i,4R)$.
   \begin{claim}$U_{R}({\Omega})=\Omega'$.
    \end{claim}Let us admit the above claim for a while. The above claim
    yields $\Omega'\subseteq \bigcup_{i=1}^l B(x_i,5R) $. Let
   $\{\Omega_i'\}_{i=1}^l$ be the Voronoi partition associated with
   $\{x_i\}$ then we have $\diam \Omega_i'\leq 10 R$. Proposition \ref{emil:prop}
   gives $h(\Omega_i')\geq 1/(10R)$. According to Proposition
    \ref{bisect:prop} we obtain
    \begin{align*}
     \lambda^N_{k-1}(\Omega')\geq 1/(20R)^2= \lambda^N_k(\Omega)/(20cn^2 \log k)^2,
     \end{align*}which implies the lemma.

    Suppose that $U_{R}(\Omega)\neq \Omega'$. There exists $x\in
    \partial \Omega'$ such that $B(x,R)\cap \Omega =\emptyset$.
    Lemma \ref{lem2:mthm2} together with Proposition \ref{emil:prop} and (\ref{diam est}) show that
   \begin{align*}
    \diam \Omega' \lesssim n/h(\Omega') \lesssim n/h({\Omega})\leq n
    \diam {\Omega},
    \end{align*}which gives the existence of an absolute constant $c_1>0$
    such that $\diam \Omega' \leq c_1 n\diam \Omega$. The Bishop-Gromov inequality yields
   \begin{align*}
    \mu (B(x,R))\geq (R/\diam \Omega')^n\geq R^n/(c_1 n
    \diam {\Omega})^n.
    \end{align*}Since $\diam {\Omega}\leq c_2
   nk/\sqrt{\lambda^N_{k}({\Omega})}$ for some absolute constant $c_2>0$
    (Theorem \ref{diam est1})
    we have
   \begin{align*}
    \mu(B(x,R))\geq (c\log k)^n/(c_1 c_2 k)^n> 1/k^n,
    \end{align*}provided that $c$ is large enough absolute constant such
   that $(c\log k)/(c_1c_2) > 1$. We thereby obtain
   \begin{align*}\mu(B(x,R)\cup
   {\Omega})=\mu(B(x,R))+\mu({\Omega})> 1/k^n + (1-1/k^n)=1,
    \end{align*}which is a contradiction. 
    \end{proof}
   \end{lem}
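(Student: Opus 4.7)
The plan is to apply Gromov's ham sandwich method (Proposition \ref{bisect:prop}) directly to $\Omega'$, producing a covering of $\Omega'$ by at most $k-1$ convex pieces whose Cheeger constants are each comparable with $\sqrt{\lambda^N_k(\Omega)}/(n^2\log k)$. Since the hypothesis says $\Omega$ occupies all but a $k^{-n}$ fraction of $\Omega'$, the natural strategy is to build the combinatorial skeleton of the partition from metric data inside $\Omega$ (where $\lambda^N_k(\Omega)$ gives leverage via Theorem \ref{reduction:thm}), and then argue that the thin sliver $\Omega'\setminus\Omega$ is absorbed by a mild $R$-fattening of that skeleton.

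First I would fix a scale $R\sim n^2\log k/\sqrt{\lambda^N_k(\Omega)}$ and show that any $4R$-separated subset of $\Omega$ has at most $k-1$ points. Applying Theorem \ref{reduction:thm} to $\Omega$ with $l=k$ and $\kappa_i=k^{-n}$ gives a separation bound of order $n\log k/\sqrt{\lambda^N_k(\Omega)}$; if $k$ points were $4R$-apart, the balls $B(x_i,R)$ in $\Omega$ would be pairwise $2R$-separated, and Bishop--Gromov applied with the diameter bound of Theorem \ref{diam est1} (up to choosing the absolute constant in $R$ appropriately) would give each mass at least $k^{-n}$, contradicting the separation bound. So we may take a maximal $4R$-separated set $\{x_1,\ldots,x_l\}\subseteq\Omega$ with $l\le k-1$ and $\Omega\subseteq\bigcup_i B(x_i,4R)$.

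The crucial claim I then need is that $U_R(\Omega)=\Omega'$, which is where the hypothesis $\vol\Omega\geq(1-k^{-n})\vol\Omega'$ enters. I would argue by contradiction: if some $x\in\Omega'$ had $B(x,R)\cap\Omega=\emptyset$, then $B(x,R)\subseteq\Omega'\setminus\Omega$ has normalized mass at most $k^{-n}$, so it suffices to produce the lower bound $\mu(B(x,R))>k^{-n}$. For this I need $\diam\Omega'$ controlled by $\diam\Omega$ up to a factor of $n$, which is exactly what Lemma \ref{lem2:mthm2} combined with Proposition \ref{emil:prop} and (\ref{diam est}) provides: $\diam\Omega'\lesssim n/h(\Omega')\lesssim n/h(\Omega)\lesssim n\diam\Omega$. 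Then Bishop--Gromov in $\Omega'$ together with $\diam\Omega\lesssim nk/\sqrt{\lambda^N_k(\Omega)}$ from Theorem \ref{diam est1} yields $\mu(B(x,R))\gtrsim(R/\diam\Omega')^n\gtrsim(c\log k/k)^n$, which exceeds $k^{-n}$ once the absolute constant hidden in $R$ is chosen large enough.

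Granted the claim, $\Omega'\subseteq\bigcup_i B(x_i,5R)$, so the Voronoi partition of $\Omega'$ with respect to $\{x_i\}$ consists of $l\le k-1$ convex pieces of diameter at most $10R$; by Proposition \ref{emil:prop} each piece has Cheeger constant at least $1/(10R)$, and Proposition \ref{bisect:prop} gives $\lambda^N_{k-1}(\Omega')\geq 1/(20R)^2$, which is the desired inequality. The main obstacle is the neighborhood claim $U_R(\Omega)=\Omega'$: it requires threading together Lemma \ref{lem2:mthm2} (transporting Cheeger control from $\Omega$ to $\Omega'$), the Buser--Ledoux-type diameter bound (converting Cheeger control into a usable volume estimate via Bishop--Gromov), and Theorem \ref{reduction:thm} (pinning the scale $R$ against $\lambda^N_k(\Omega)$). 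Everything else mirrors the Voronoi-plus-bisection pattern already used in the proof of Theorem \ref{Mthm1}.
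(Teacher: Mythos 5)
Your proposal follows exactly the same route as the paper: Theorem \ref{reduction:thm} with $\kappa_i = k^{-n}$ pins the scale $R \sim n^2\log k/\sqrt{\lambda^N_k(\Omega)}$, the maximal $4R$-separated net in $\Omega$ has at most $k-1$ points by the same Bishop--Gromov plus diameter-bound argument, the key claim $U_R(\Omega)=\Omega'$ is established by the same contradiction (via Lemma \ref{lem2:mthm2}, Proposition \ref{emil:prop}, inequality (\ref{diam est}), Theorem \ref{diam est1}, and Bishop--Gromov in $\Omega'$), and the Voronoi partition plus Proposition \ref{bisect:prop} finish is identical. The argument is correct.
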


   In order to adapt to the hypothesis of Lemma \ref{lem:mthm2} we use the following improvement of Borell's lemma.
   \begin{thm}[{\cite[Section 1 Remark]{guedon}}]\label{guedon:thm}Let $\Omega,\Omega'$ be two
    bounded convex domains such that $\Omega\subseteq \Omega'$. Assume that $\Omega$ is symmetric. Then
    for any $r\geq 1$ we have
    \begin{align*}
     \mu(\Omega'\setminus r\Omega)\leq (1-\mu(\Omega))^{\frac{r+1}{2}},
     \end{align*}where $r\Omega:=\{ rx \mid x\in \Omega\}$.
    \end{thm}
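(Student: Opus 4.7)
The plan is to establish this Borell-type inequality by combining the multiplicative Brunn--Minkowski inequality with a carefully chosen Minkowski-sum inclusion, using the symmetry of $\Omega$ to turn the Minkowski functional $\|\cdot\|_\Omega$ into an honest norm.

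First I would prove the set-theoretic inclusion
\begin{align*}
\frac{2}{r+1}(\Omega'\setminus r\Omega) + \frac{r-1}{r+1}\Omega \subseteq \Omega'\setminus \Omega.
\end{align*}
Given $a\in\Omega$ and $b\in\Omega'\setminus r\Omega$, the point $c:=\frac{2}{r+1}b+\frac{r-1}{r+1}a$ lies in $\Omega'$ by convexity. Since $\Omega=-\Omega$, the functional $\|\cdot\|_\Omega$ is a genuine norm, so the reverse triangle inequality yields
\begin{align*}
\|c\|_\Omega\geq \frac{2}{r+1}\|b\|_\Omega-\frac{r-1}{r+1}\|a\|_\Omega > \frac{2r-(r-1)}{r+1}=1,
\end{align*}
and hence $c\notin\Omega$. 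Applying the multiplicative form of the Brunn--Minkowski inequality (log-concavity of $n$-dimensional Lebesgue measure under Minkowski combinations) to this inclusion gives
\begin{align*}
\mu(\Omega'\setminus \Omega)\geq \mu(\Omega'\setminus r\Omega)^{2/(r+1)}\,\mu(\Omega)^{(r-1)/(r+1)},
\end{align*}
which rearranges to the preliminary estimate $\mu(\Omega'\setminus r\Omega)\leq (1-\mu(\Omega))^{(r+1)/2}\,\mu(\Omega)^{-(r-1)/2}$.

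The main obstacle is eliminating the parasitic factor $\mu(\Omega)^{-(r-1)/2}$, which degrades the bound precisely when $\mu(\Omega)$ is small (and this is the regime of interest in Lemma \ref{lem:mthm2}). To remove it, the plan is to invoke the Lov\'asz--Simonovits localization (needle decomposition for log-concave measures), reducing the statement to its one-dimensional counterpart: for any log-concave probability density $f$ on $\mathbb{R}$ and any $s>0$ with $\theta:=\int_{-s}^{s}f$, one has $\int_{|x|>rs}f\leq (1-\theta)^{(r+1)/2}$ for $r\geq 1$. The 1D inequality follows from the convexity of $-\log\int_t^{\infty}f$ (i.e., log-concavity of the survival function of a log-concave density) combined with the symmetry of the reference interval $[-s,s]$, which pairs the two tails and absorbs the would-be $\theta^{-(r-1)/2}$ correction. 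The symmetry of $\Omega$ is essential throughout: it provides the norm structure needed in the Minkowski inclusion step, and it guarantees that the reference sub-interval in each needle produced by the localization is symmetric about its center, so that the 1D base case applies.
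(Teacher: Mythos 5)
This theorem is cited in the paper from Gu\'edon's article and is not reproved there, so the only thing to evaluate is whether your argument stands on its own.

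Your first step is correct and standard: the inclusion
\begin{align*}
\frac{2}{r+1}\bigl(\Omega'\setminus r\Omega\bigr)+\frac{r-1}{r+1}\Omega\subseteq\Omega'\setminus\Omega,
\end{align*}
verified via the reverse triangle inequality for the gauge $\|\cdot\|_\Omega$ (which uses the symmetry of $\Omega$), together with the multiplicative Brunn--Minkowski inequality, gives
\begin{align*}
\mu(\Omega'\setminus r\Omega)\leq (1-\mu(\Omega))^{\frac{r+1}{2}}\mu(\Omega)^{-\frac{r-1}{2}}.
\end{align*}
This is exactly Borell's lemma, and you correctly identify the task as removing the $\mu(\Omega)^{-(r-1)/2}$ factor.

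The gap is in the localization step. You claim that after a Lov\'asz--Simonovits needle decomposition, the problem reduces to a one-dimensional statement about a symmetric window $[-s,s]$ and the complementary tails $\{|x|>rs\}$, and that the symmetry of $\Omega$ ``guarantees that the reference sub-interval in each needle produced by the localization is symmetric about its center.'' This is not so. On a needle $\ell(t)=\ell_0+tv$ the relevant object is the convex function $g(t)=\|\ell(t)\|_\Omega$, and the sets $\ell\cap\Omega$ and $\ell\cap r\Omega$ are the sublevel sets $\{g<1\}$ and $\{g<r\}$. Unless the needle passes through the origin, $g$ is \emph{not} symmetric about its minimizer; the two sublevel intervals need not be concentric and $\{g<r\}$ is not obtained from $\{g<1\}$ by a dilation about a common center. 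Concretely, for a piecewise-linear gauge restricted to a line one can arrange $g$ so that the midpoints of $\{g\leq 1\}$ and $\{g\leq r\}$ differ (the breakpoints of $g$ on the two sides of its minimizer need not be matched). Consequently the one-dimensional lemma you invoke --- $\int_{|x|>rs}f\leq(1-\int_{-s}^{s}f)^{(r+1)/2}$ for log-concave $f$ --- is not the statement that localization hands you; what comes out is an inequality for an arbitrary convex $g$ (subject only to the constraints it inherits as the restriction of a norm to a line), and you have not proved that. In addition, even the symmetric one-dimensional lemma is only asserted, with ``pairing the tails'' left unexplained: applying log-concavity of the survival function to each tail reproduces the same parasitic factor $(\theta+G(\pm s))^{-(r-1)/2}$ tail-by-tail, and the cancellation when the two tails are summed is precisely the nontrivial content that needs a proof. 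As it stands the argument does not close, and a correct proof must either establish the genuinely general one-dimensional convex-sublevel-set inequality, or avoid localization altogether (Gu\'edon's original argument does the latter).
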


%    For general (not necessarily symmetric) bounded convex domain we approximate
%       the domain from inside by an ellipsoid. Let $\Omega\subseteq \mathbb{R}^n$ be a %bounded convex
%   domain. There is a unique ellpsoid $\e_{\Omega}$ of maximal volume
%   contained in $\Omega$ (\cite{john}). 
  % \begin{align}\label{fjohn}
  %  \e_{\Omega} \subseteq \Omega \subseteq n\e_{\Omega}
  %  \end{align}(\cite{john}). If $\Omega$ is also symmetric we have $\Omega\subseteq
  %  \sqrt{n}\e_{\Omega}$.
%  We define the \emph{volume ratio} of a bounded convex
%    domain $\Omega\subseteq \mathbb{R}^n$ as $(\vol\Omega/\vol \e_{\Omega} )^{1/n}$.
%   \begin{thm}[{\cite[Theorem 1']{ball2}}]Among all bounded convex domains in
%    $\mathbb{R}^n$, $n$-dimensional tetrahedra have maximal volume ratio.
%    \end{thm}Computing the volume ratio of a regular tetrahedron one can get
%    \begin{align}\label{vr:s}
%     \vol \Omega \leq c^n n^{n/2}\vol \e_{\Omega}
%     \end{align}for any bounded convex domain $\Omega\subseteq
%     \mathbb{R}^n$, where $c>0$ is an absolute constant.

  \begin{proof}[Proof of Theorem \ref{Mthm2}]We first consider the case
   where $\Omega$ is symmetric. According to Theorem \ref{guedon:thm}
   setting 
   \begin{align*}
    r:=2\max \Big\{ \frac{n\log k}{-\log (1-v)}, 1\Big\}
    \end{align*}we have $\mu(\Omega'\setminus r\Omega)<1/k^n$.
   Take a bounded convex domain $\widetilde{\Omega}\subseteq r\Omega
   \cap \Omega'$ with piecewise smooth boundary such that
   $\Omega\subseteq \widetilde{\Omega}$ and $\mu(\Omega'\setminus
   \widetilde{\Omega})<1/k^n$. Since $\widetilde{\Omega}\subseteq
   r\Omega$ Theorem \ref{Mthm1} implies
   \begin{align}\label{eigen comp1}
    (nr\log k)^2\lambda^N_{k-1}(\widetilde{\Omega})\gtrsim r^2\lambda^N_k(r\Omega)=\lambda^N_k(\Omega).
    \end{align}Using Lemma \ref{lem:mthm2} we
   also obtain
   \begin{align}\label{eigen comp2}
    (n^2\log k)^2\lambda^N_{k-2}(\Omega')\gtrsim \lambda^N_{k-1}(\widetilde{\Omega}).
   \end{align}Combining the above two inequalities (\ref{eigen comp1})
   and (\ref{eigen comp2})
   we obtain (\ref{Mthm2:s1}).

   For general (not necessarily symmetric) $\Omega$, there exists a
   choice of a center (we may assume here that the center is the origin
   without loss of generality) such that $\vol (\Omega \cap -\Omega)\geq
   2^{-n}\vol \Omega$ (\cite[Corollary]{stein}). By virtue of Theorem \ref{guedon:thm} setting 
   \begin{align*}
     r:=2\max \Big\{ \frac{n\log k}{-\log (1-2^{-n}v)}, 1\Big\}
    \end{align*}we get $\mu(\Omega'\setminus r\Omega)\leq
   \mu(\Omega'\setminus r(\Omega\cap -\Omega))<1/k^n$. Thus applying the same proof of the
   symmetric case we obtain (\ref{Mthm2:s2}). This completes the proof.
   \end{proof}

   \section{Questions}
   In this section we raise several questions which concern this paper.
 \begin{quest}\label{quest1}Let $\Omega$ be a bounded convex domain in $\mathbb{R}^n$
  with piecewise smooth boundary. Then for any natural number $k$ and
  $\kappa_0,\kappa_1,\cdots,\kappa_k>0$ can we get
  \begin{align*}
   \sep(\Omega;\kappa_0,\kappa_1,\cdots,\kappa_k)\lesssim \frac{1}{(\log k)
   \sqrt{\lambda^N_k(\Omega)}}\max_{i\neq j} \log
   \frac{1}{\kappa_i\kappa_j} \ ?
   \end{align*}
  \end{quest}We can subtract $\log k$ terms in Theorems \ref{Mthm1} and \ref{Mthm2}
   once we get an affirmative answer to Question \ref{quest1} since
   \cite[Theorem 3.4]{funafuna} gives
   \begin{align*}
    \sep(\Omega;\kappa_0,\kappa_1,\cdots,\kappa_l) \leq
    \frac{c^{k-l+1}}{(\log k)\sqrt{\lambda^N_k(\Omega)}} \max_{i\neq
    j}\log \frac{1}{\kappa_1\kappa_j}
    \end{align*}for any two natural numbers $l\leq k$ and any
    $\kappa_0,\kappa_1,\cdots, \kappa_l>0$, where $c>0$ is an absolute constant. 
 \begin{quest}\label{quest2}Let $\Omega$ be a bounded convex domain in $\mathbb{R}^n$ with
  piecewise smooth boundary and assume that $\Omega$ satisfies the following
  $(k-1)$-separation inequality for some $k$:
  \begin{align*}
   \sep((\Omega,\mu);\kappa_0,\kappa_1,\cdots,\kappa_{k-1})\leq
   \frac{1}{D}\max_{i\neq j}\log \frac{1}{\kappa_i\kappa_j} \ \ (\forall \kappa_0,\kappa_1,\cdots,\kappa_{k-1}>0).
   \end{align*}Then do there exist an absolute constant $c>0$ and a convex partition $\Omega=
  \bigcup_{i=1}^{l}\Omega_i$ with $l\leq k-1$ such that
  \begin{align*}
   \mu(\Omega_i)\geq \frac{1}{ck} \text{ and }
   \sep ((\Omega_i, \mu|_{\Omega_i});\kappa,\kappa)\leq
   \frac{c}{D}\log\frac{1}{\kappa} 
   \end{align*}for any $\kappa$ ?
  \end{quest}
An affirmative answer to Question \ref{quest2} implies the universal
inequality $\lambda^N_k(\Omega)\lesssim (\log k)^2\lambda^N_{k-1}(\Omega)$
via Theorem \ref{emil:thm} and Proposition \ref{bisect:prop}. If both
Questions \ref{quest1} and \ref{quest2} is affirmative then we can
obtain $\lambda^N_k(\Omega)\lesssim \lambda^N_{k-1}(\Omega)$.
\begin{quest}\label{quest3} Let $\Omega,\Omega'$ be two bounded convex
 domains with piecewise smooth boundaries such that $\Omega\subseteq
 \Omega'$. Set $v:=\vol \Omega/\vol \Omega'\in [\,0,1\,]$. Can we prove
 $\lambda_k^N(\Omega)\leq f_1(v) g_1(\log k) \lambda_k^N(\Omega')$ and
 $\lambda_k(\Omega') \leq f_2(v) g_2(\log k) \lambda_k^N(\Omega)$, where $f_1$ and $f_2$ are any functions and $g_1$
 and $g_2$ are some rational functions ? 
 \end{quest}
 When $k=1$ E.~Milman obtained
 \begin{align}\label{eneq:emil}
  \lambda_1^N(\Omega')\geq v^4
 \lambda_1^N(\Omega) \text{ and }\lambda_1^N(\Omega)\gtrsim (1/\log(1+1/v))^2 \lambda_1^N(\Omega')
  \end{align}(see \cite[Lemmas 5.1, 5.2]{milman1}). Combining this
  inequality with (\ref{lineq}) and (\ref{emineq}) we can get
  \begin{align}\label{eneq:last}
   \lambda_k^N(\Omega')\gtrsim v^4k^{\frac{2}{n}-2} \lambda_k^N(\Omega) \text{ and
   }\lambda_k^N(\Omega)\gtrsim (k^{\frac{1}{n}-1}/\log(1+1/v))^2\lambda_k^N(\Omega'),
   \end{align}but this does not imply the answer to Question \ref{quest3}.

%\begin{quest}\label{quest4}Is it true that if two bounded convex domains $\Omega,\Omega'\subseteq
%    \mathbb{R}^n$ satisfy $\vol(\Omega)\simeq \vol(\Omega \cap \Omega') \simeq \vol(\Omega')$, then
%     $\lambda^N_k(\Omega)\simeq \lambda^N_k(\Omega')$ ? 
% \end{quest}
% In the same situation of Question \ref{quest4} we can have that
% $\lambda^N_k(\Omega)$ and $\lambda^N_k(\Omega')$ are equivalent up to
% some rational function of $\log k$ and $n$ by combining Theorems \ref{Mthm1}
% with \ref{Mthm2}. The Weyl asymptotic formula tells that Question
% \ref{quest4} is true for sufficiently large $k$ however the largeness
% of $k$ depends on the domains $\Omega$ and $\Omega'$.

\bigbreak
\noindent
{\it Acknowledgments.} The author would like to express his appreciation to Prof. Larry
Guth for suggesting Gromov's ham sandwich method and many
stimulating discussion. 
The author also would like to thank to Prof. Emanuel Milman for
useful comments, especially pointing out Examples \ref{ex:intro} and \ref{ex:intro2}, and to Prof. Alexander Grigor'yan for corrections of the
$1$st version of this paper and useful comments.
The author is grateful to Prof. Boris Hanin, Prof. Ryokichi Tanaka, and Dr. Gabriel Pallier
for useful comments and anonymous referees for their helpful suggestions. This work was done during the
author's stay in MIT. The author thanks for its hospitality and
stimulating research environment.

	\end{document}